\documentclass[a4paper,11pt]{article}  

\usepackage{graphicx,color,subfigure,multirow, here}
\usepackage{mathtools}
\usepackage[utf8]{inputenc} 
\usepackage[T1]{fontenc}
\usepackage[english]{babel}
\usepackage{amsmath, amsfonts, amsthm,amssymb,here,dsfont, hyperref}
\usepackage[numbers]{natbib}

\bibliographystyle{plain}

\newcommand{\one}{\mathds{1}}

\newtheorem{theo}{Theorem}[section]
\newtheorem{defi}[theo]{Definition}
\newtheorem{prop}[theo]{Proposition}

\newtheorem{rem}[theo]{Remark}
\newtheorem{ass}[theo]{Assumption}

\newcommand{\n}{\noindent }

\newcommand{\E}{\mathbb{E}}
\newcommand{\R}{\mathbb{R}}
\newcommand{\N}{\mathbb{N}}

\renewcommand{\P}{\mathbb{P}}

\makeatletter

\@addtoreset{equation}{section}
\makeatother

\setlength{\hoffset}{-18pt}         
\setlength{\oddsidemargin}{0pt} % Marge gauche sur pages impaires
\setlength{\evensidemargin}{8pt} % Marge gauche sur pages paires
\setlength{\marginparwidth}{54pt} % Largeur de note dans la marge
\setlength{\textwidth}{484pt} % Largeur de la zone de texte (17cm)
\setlength{\voffset}{-18pt} % Bon pour DOS
\setlength{\marginparsep}{7pt} % Separation de la marge
\setlength{\topmargin}{0pt} % Pas de marge en haut
\setlength{\headheight}{13pt} % Haut de page
\setlength{\headsep}{10pt} % Entre le haut de page et le texte
\setlength{\footskip}{37pt} % Bas de page + separation
\setlength{\textheight}{660pt} % Hauteur de la zone de texte (25cm)$

\begin{document}

\title{Exponential ergodicity for diffusions \\with jumps driven by a Hawkes process}

\maketitle

\begin{center}
\author{Charlotte Dion$^{(1)}$ and Sarah Lemler$^{(2)}$ and Eva L\"ocherbach$^{(3)}$ \vspace{3mm}\\

$^{(1)}$ Sorbonne Université, UMR CNRS 8001, LPSM, 75005 Paris, France
\\
$^{(2)}$ \'Ecole CentraleSup\'elec, MICS, 91190 Gif-sur-Yvette, France
\\
$^{(3)}$ Université Paris 1, SAMM EA 4543, 75013 Paris, France}
\end{center}

\section*{Abstract}

In this paper, we introduce a new class of processes which are diffusions with jumps driven by a multivariate nonlinear Hawkes process. Our goal is to study their long-time behavior. In the case of exponential memory kernels for the underlying Hawkes process we establish conditions for the positive Harris recurrence of the couple $(X,Y )$, where $X$ denotes the diffusion process and $Y$ the piecewise deterministic Markov process (PDMP) defining the stochastic intensity of the driving Hawkes. As a direct consequence of the Harris recurrence, we obtain the ergodic theorem for $X.$  Furthermore, we provide sufficient conditions under which the process is exponentially $\beta-$mixing.

\n {\bf AMS Classification}: 60J35, 60F99, 60H99, 60G55 \\
%62F12, 62M05, 62H30

\n {\it Keywords:} Diffusions with jumps; nonlinear Hawkes process, Piecewise deterministic Markov processes (PDMP), Ergodicity; Foster-Lyapunov drift criteria; $\beta$-mixing.

\section{Introduction}\label{sec:intro}

\subsection{Motivation}

%Jump-diffusions driven by a L\'evy process are well known. 

In this work, we introduce a new class of continuous time processes $ X = (X_t)_{t \geq 0}$ taking values in $ \R,$  with jumps driven by a multivariate nonlinear Hawkes process $N, $ satisfying the following equation 
\begin{equation}\label{eq:1}
X_t=X_0+\int_0^t b(X_s)ds+\int_0^t\sigma(X_s)dW_s+\int_0^ta(X_{s-})\sum_{j=1}^MdN_s^{(j)},
\end{equation}
where $W$ is the standard one-dimensional Brownian motion and $N$ is a multivariate Hawkes process having $M$ components with intensity process $\lambda.$ $N$ is supposed to be independent from $W.$
We shall specify the dynamics of $N$ later on, in particular, the precise form of the stochastic intensity process $ \lambda $ is given in \eqref{eq:intensity} below. 

The jumps of the Hawkes process impact the dynamic of the diffusion process. In this model, the structure of the jumps is different from classical jump-diffusion processes with L\'evy jumps. Indeed: \\
1) the intensity of the jump process $N$ does not depend on the dynamic of $X$,\\
2) the Hawkes process has a special structure of time dependency: the state of the intensity at time $t$ depends on the entire past,\\
3) the multidimensional nature of the Hawkes process can be interpreted as the influence of $M$ subjects communicating one with each other and impacting the dynamic of $X$ along time. \\

Our main motivation is to be able to describe systems of interacting neurons, where we are in particular interested in modeling 
%{\color{red}
 the membrane potential of a single neuron together with the sequence of spike times of its presynaptic neurons. Indeed, neurons are communicating through the emission of electrical signals, forming a network. 
When a (presynaptic) neuron sends a signal to another (postsynaptic) neuron, the membrane potential of the  neuron increases sharply, and we say that the neuron spikes at this time.  If we are looking at one fixed neuron at the heart of this network, its activity between two {spikes} of one of its presynaptic neurons can be designed by the present model. Indeed, during this interval the membrane potential can be described by a diffusion process influenced by the spikes coming from the presynaptic neurons which are modeled by the Hawkes process. 
Hawkes processes have been studied recently in the context of neurosciences \citep[see][]{RBRTM, hodara2017} to model the interactions between the neurons within their sequences of spikes. The new model we propose allows to use both informations: the continuous membrane potential for a fixed neuron together with the spikes of several other neurons around it.

The aim of the present article is to study the longtime behavior of the process $X.$ Since $X$ is not Markov on its own, we propose a study of the couple of processes $(X, \lambda )$  in the case when the intensity $ \lambda $ of $N$ can be described in terms of a piecewise deterministic Markov process (PDMP) $Y,$ see \eqref{eq:aux} below. Such a description is possible  when the memory kernels are {\it exponential}. We prove the positive Harris recurrence of $(X,Y)$ together with ergodicity, and we establish the speed of $\beta-$mixing. These are important probabilistic properties which are interesting in its own right but which are also the key ingredients for the theoretical study of estimators of the model parameters. The estimation of the coefficients of the process is the issue of the companion paper \cite{charlottesarah}.

\subsection{State of the art}
Time-homogeneous diffusion processes are well known Markov processes that have been widely studied. Let us cite for example \cite{itodiffusion} which has initiated the work on Markovian properties of diffusion processes or the by now classical references \cite{ikeda2014stochastic,revuz} in which all properties of diffusions are summarized. 
Ergodicity and strong mixing have been established in the pioneering papers \cite{veret, veret2}. 
Then, jump-diffusion processes have been introduced, for example in the context of risk management \citep[see][]{TANKOV}, either driven by a Poisson process or more generally by a L\'evy process \citep[see][for a review]{applebaum}. Their ergodicity and mixing properties have been studied from a probabilistic point of view by \cite{MASUDA}.

On the other hand, Hawkes processes \citep[][]{HAWKES71} have been studied a lot lately. 
These  point processes generalize the Poisson processes by introducing what is commonly called ``self-excitation'': Present jumps are able to trigger (or to inhibit) future jumps, and in this way, correlations between successive jump events are introduced. 
\cite{DVJ} gives a precise synthesis of earlier
results published by Hawkes.
Important probabilistic results for these mutually exciting point processes, such as stability, limit theorems and large population limits, have been obtained in \cite{BM1996, DFH, BDH, BACRY, Clinet, MADS}. Ergodicity for general non-linear Hawkes processes with general memory kernels has been recently established by \cite{MADS}. For linear Hawkes processes, \cite{Carl} proves exponential ergodicity, giving very precise estimates on the regeneration times. Finally \cite{Clinet} establishes exponential ergodicity in the particular case of linear Hawkes processes with exponential memory kernels. 

Relying on one of these recent results on ergodicity of Hawkes processes, it is therefore reasonable to argue that the solution of \eqref{eq:1} should be ergodic itself, provided we are able to ensure that the diffusion part of \eqref{eq:1}, that is the dynamic \eqref{eq:1} with $ a (\cdot ) \equiv 0,$ driving the motion in between successive jumps, is ergodic.

However, no such general criterion,  ensuring that the ergodicity of the underlying jump process implies the ergodicity  of the associated diffusion with jumps, exists, at least not to our knowledge. 

The present paper proposes therefore a short study of the ergodicity of diffusions with jumps driven by Hawkes processes with exponential memory kernels. In this case, the associated stochastic intensity processes of the jump part can be expressed in terms of particular Markov processes, PDMP's, see e.g. \cite{DLO16}. For these PDMP's, ergodicity has been established by \cite{Clinet} in the case of linear Hawkes processes, and by \cite{DLO16} in the case of non-linear Hawkes processes with Erlang kernels. Here, we extend these results to the general non-linear case, including also inhibitions \footnote{Notice that \cite{CHEN2017} has obtained a control of the $\tau$-mixing coefficient in the inhibitory case -- but this does not imply the $ \beta-$mixing property of the process. }, 
and more importantly, we prove the joint ergodicity as well as the  $\beta$-mixing property for the couple $(X,Y )$ in the case of exponential kernel functions. Finally, we give criteria ensuring that the mixing is exponential. 

\subsection{Main contributions and plan of the paper}
We propose a general study of the longtime behavior of a jump diffusion process, where the jumps are driven by a multivariate non-linear Hawkes process in dimension $M,$ having exponential memory kernels. 
%In this case, the stochastic intensity can be represented in terms of a multidimensional piecewise deterministic Markov process (PDMP) $Y,$ and 
Our proof of the Harris recurrence of $(X, Y)$ follows a well-known scheme, see e.g. \cite{MT1992, MT1993}. 

Firstly, we establish a local D\"oblin lower bound for the transition semigroup of the process $ (X, Y ) ,$ based on a lower bound of the joint transition density of $(X, Y).$  This is the content of Theorem \ref{thm:Doeblin}. The ideas that we use to obtain this lower bound are very natural, and they have appeared independently of our work already in  \cite{Clinet}. They have also been exploited in \cite{DLO16} and in \cite{evaSPA}: more precisely, we use $M^2$ consecutive jumps which are carefully chosen to produce a part of Lebesgue density for the transition of $Y.$  Once the jump part of the process possesses some part of Lebesgue density, we have to ensure that this density is preserved by the stochastic flow. This part of the proof relies on fine support properties of $ (X,Y)$ and provides, as a by-product, a simulation algorithm for the process. In particular we need to localize the possible positions of the diffusion part, conditionally on the jump times of the underlying Hawkes process that have already been chosen. 

In a second step, a Foster-Lyapunov condition (see Proposition \ref{prop:lyapunov}) ensures the control of the return times to some compact set $K.$ We then use control arguments to deduce from this a precise control of the hitting times of the set $C$  where the transition densities are lower-bounded according to the local D\"oblin lower bound.  Both results together imply the positive Harris recurrence of the couple $(X, Y), $ as well as the ergodic theorem. This is our first main result, stated in Theorem \ref{harrisrec}. 

Finally, following the steps of \cite{MASUDA} where the ergodicity and exponential $\beta$-mixing bounds are established for diffusions with jumps driven by a L\'evy process and the general framework of \cite{kulik},  our second main theorem \ref{theo:second} states the mixing property of the process together with a control on the rate of convergence. To conclude, our Theorem \ref{thm:mixing} provides sufficient criteria implying the exponential  control of the $\beta-$mixing coefficient of $(X,Y ).$

Finally, let us mention that it is possible to extend our results to multivariate diffusions driven by Hawkes processes. However, the primary aim of our article is to provide a theoretical probabilistic foundation for a statistical application where we want to deal with  data describing the membrane potential of one single neuron together with the spike trains (the point process data associated to the subsequent times of action potentials) of its presynaptic neurons. This is why we concentrate on a one-dimensional diffusion model, driven by a multivariate Hawkes process in this paper. Indeed, in this modeling context, the joint evolution of the membrane potentials of, say, $d$ neurons, is independent, conditional on the evolution of the spike trains of their presynaptic neurons. This is due to the fact that the membrane potential processes of different neurons within a network do only interact via the incoming stimuli, that is, the incoming spike trains of the presynaptic neurons. In other words, their driving Brownian motions are independent - no interaction is present in their drift nor in their diffusion coefficient.

Our paper is organised as follows. We introduce our model together with all necessary notation and assumptions in Section \ref{sec:notations}. Section \ref{sec:useful} provides a simulation algorithm of the process which is interesting in its own right. In Section \ref{sec:proba}, we prove the ergodicity and finally the exponential $\beta-$mixing of the coupled process $( X, Y).$

\section{Presentation of the model and first properties}\label{sec:notations}

\subsection{The model}
Throughout this paper we work on a filtered probability space $(\Omega,{\mathcal F} , \mathbb{F}).$
We start by introducing the jump part of our process,  that is, the multivariate Hawkes process. 

This process is defined in terms of a collection of jump rate functions  $  f_i : \R \to \R_+ , 1 \le i \le M ,$ and in terms of interaction functions (also called {\it memory kernels})  $ h_{i j } : \R _+ \to \R  , 1 \le i, j \le M ,$ which are measurable functions. Let moreover $n^{(i)} , 1 \le i \le M,$  be discrete point measures on $\R_{-} $ satisfying that 
$$ \int_{\R_{-} }| h_{ij}|   ( t- s) n^{(j)} (ds ) < \infty \mbox{ for all } t \geq 0 .$$
We interpret them as {\it initial condition} of our process. 

\begin{ass}\label{ass:0} 
We suppose that each $ f_i , 1 \le i \le M,$ is Lipschitz continuous with Lipschitz constant $ \gamma_i .$ 
\end{ass}

\begin{defi}
A non-linear Hawkes process with parameters $ (f_i, h_{ij})_{1 \le i,j  \le M} , $ and with initial condition $ n^{(i)}, 1 \le i \le M,$ is a multivariate counting process $N_t= (N_t^{(1)}, \ldots, N_t^{(M)} ) , t \geq 0,$ such that 
\begin{enumerate}
\item
$\P-$almost surely, for all $ i \neq j  ,  N^{(i)} $ and $ N^{(j) }  $ never jump simultaneously,
\item
for all $1 \le i \le M, $ the compensator of $N^{(i) }_t $ is given by $\int_0^t \lambda^{( i) }_s ds ,  $ where 
\begin{equation}\label{eq:intensity}
\lambda^{(i)}_t= f_i \left(  \sum_{j=1}^M \int_{0}^{t-} h_{i j }(t-u)dN^{(j) }_u  + \sum_{j=1}^M \int_{-\infty}^{0} h_{i j }(t-u)dn^{(j) }_u \right) .
\end{equation}
\end{enumerate}
\end{defi}
If the functions $ h_{ij } $ are locally integrable, the existence of the process $(N_t)_{t \geq 0}$ with the prescribed intensity \eqref{eq:intensity}  on finite time intervals follows from standard arguments, see e.g. \cite{BM1996} or \citep{DFH}.

We consider a jump diffusion process $X = (X_t)_{ t \geq 0 } $ taking values in $\R, $ whose jumps are governed by $N.$ More precisely, $X$ is solution of the stochastic differential equation 
\begin{equation}\label{model}
X_t =X_0 +\int_0^t b(X_s) ds + \int_0^t  \sigma(X_s)dW_s + \int_0^t a(X_{s -}) \sum_{j =1}^M  dN^{(j)}_s,
\end{equation}
where $X_0$ is a random variable independent of $W$ and of $ N .$ Here $W$ is the standard Brownian motion in dimension one, independent of the multivariate Hawkes process $N $ having intensity \eqref{eq:intensity}.

\begin{prop}
Suppose that Assumption \ref{ass:0} holds, that the coefficients $b, \sigma $ are globally Lipschitz continuous, that $a$ is measurable and that the memory kernel functions $h_{ij} $ are locally integrable. Then equation (\ref{model}) admits a unique strong solution.
\end{prop}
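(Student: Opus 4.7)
The plan is to construct the solution pathwise by exploiting the decoupled structure of the equation: because the intensity \eqref{eq:intensity} of $N$ does not involve $X$, the Hawkes process can be built first and then $X$ reconstructed in between its jumps as the solution of a classical Lipschitz SDE.

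First I would invoke the existence results cited in the excerpt (\cite{BM1996}, \cite{DFH}) to build $N=(N^{(1)},\dots,N^{(M)})$ on $[0,\infty)$ with intensity \eqref{eq:intensity}, using the local integrability of the kernels and Assumption \ref{ass:0}. This gives a strictly increasing sequence $(T_k)_{k\ge 1}$ of jump times of $\sum_{j=1}^M N^{(j)}$, and standard arguments show that $T_k\to\infty$ almost surely (linear growth of the intensity via the Lipschitz constants $\gamma_i$ forbids explosion in finite time). Since $N$ is independent of $W$, we can work pathwise in the Hawkes data.

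Next I would define $X$ inductively on the intervals $[T_k,T_{k+1})$. On $[0,T_1)$, the equation reduces to
\begin{equation*}
X_t = X_0 + \int_0^t b(X_s)ds + \int_0^t \sigma(X_s)dW_s,
\end{equation*}
which has a unique strong solution under the global Lipschitz assumption on $b,\sigma$ by the standard It\^o existence theorem. At the jump time $T_1$ the driving measure produces an increment $a(X_{T_1-})$, so I set
\begin{equation*}
X_{T_1} = X_{T_1-} + a(X_{T_1-}),
\end{equation*}
which is meaningful since $a$ is Borel measurable and $X_{T_1-}$ is an $\mathcal F_{T_1-}$-measurable random variable. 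Iterating this construction on $[T_k,T_{k+1})$ with initial condition $X_{T_k}$, and concatenating, yields an $\mathbb F$-adapted c\`adl\`ag process $X$ satisfying \eqref{model} on $[0,\infty)$ because the stochastic integral against $\sum_j dN^{(j)}_s$ coincides with $\sum_{T_k\le t}a(X_{T_k-})$ and the continuous part is obtained by summing the contributions on each inter-jump interval.

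For uniqueness, I would argue by induction on $k$: any two strong solutions must agree with $X_0$ at time $0$, they must coincide on $[0,T_1)$ by pathwise uniqueness of the Lipschitz SDE, they must have the same value at $T_1$ because the jump increment is a deterministic function of the left limit, and then the argument propagates across all $T_k$. Since $T_k\to\infty$ a.s., pathwise uniqueness holds on $[0,\infty)$.

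The only potential obstacle is that $a$ is merely measurable rather than Lipschitz; this could have been a problem if the jump term appeared continuously, but because the jumps of $N$ are isolated and $a$ is only evaluated at the countably many left limits $X_{T_k-}$, measurability is enough to make the recursion well-defined and to preserve uniqueness. The remaining verification — that $X$ does not explode between jumps, and that the finite number of jumps on any $[0,t]$ prevents accumulation — follows from the Lipschitz (hence linear-growth) estimates on $b,\sigma$ together with the almost-sure finiteness of $N_t$ on compact intervals.
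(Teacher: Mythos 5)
Your argument is correct and is essentially the paper's own proof: the paper likewise invokes the non-explosion of $N$ on finite intervals (via Theorem 6 of \cite{DFH}), then works conditionally on the jump times and pastes together strong solutions of the no-jump Lipschitz SDE between jumps, applying the deterministic increment $a(X_{T_k-})$ at each $T_k$. You spell out the induction and the role of the measurability of $a$ a bit more explicitly, but the decomposition and the key ingredients are the same.
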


\begin{proof}
Theorem 6 of \cite{DFH} implies that for any $ T > 0,$  almost surely, $ N_t $ has only a finite number of jumps on  $[0, T ].$ We can therefore work conditionally with respect to the choice of these jumps and construct the solution of \eqref{model} by pasting together trajectories of strong solutions to the diffusion equation (that is, the solution of \eqref{model} with $ a( \cdot ) \equiv 0$)  at the successive jump times (see \cite{ikeda1966} for related ideas).
\end{proof}

In the sequel we shall also need estimates on the transition densities of the diffusion process underlying \eqref{model}. We want to be able to deal with a general drift term $b$ that is not bounded in order to include Ornstein-Uhlenbeck type processes to the class of models that we consider. Unbounded drift coefficients add however a substantial difficulty when one wishes to apply techniques from Malliavin calculus to obtain bounds on the transition densities. Therefore, we impose the following additional assumptions which are inspired by \cite{gobet}.  

\begin{ass}\label{ass:coeff}
\begin{enumerate}
\item $a$ is continuous.
\item $ b $ and $ \sigma $ are of class $\mathcal{C}^2. $
\item There exist positive constants $c, q $ such that for all $x \in \R, $ $  | b' ( x) | +| \sigma' (x) | \le c  $ and  $ | b'' ( x) | + | \sigma '' (x)| \le c ( 1 + |x|^q )  .$    
\item There exist strictly positive constants $\sigma_0 $ and $\sigma_1$ such that $ \sigma_0 \le  \sigma^2(x) \le \sigma_1$ for all $ x \in \R.$ 
\end{enumerate}
\end{ass}

\begin{rem}
We do only need the above assumption to obtain good lower bounds on the transition densities of the underlying non-jumping diffusion process, see \eqref{eq:aronson} below. 
\end{rem}

To study the longtime behavior of $X$ and to ensure its ergodicity, we introduce two additional conditions which are classical in the study of diffusion processes and which are derived from \cite{Has}, see also \cite{veret}. 

In the following, we assume we are in one of the following frames. 
\begin{ass}\label{ass:erg-expo}
{\bf Exponential frame.} There exist $d \geq 0$, $ r > 0 $ such that for all  $x$ satisfying  $ |x| >r$, we have $xb(x) \leq -d x^2.$ Moreover, one of the two following conditions holds true. 
\begin{enumerate}
\item
For all $ |x| > r, $ $ 2 x a(x) + a^2 (x) \le  0.$ 
\item
For all $ |x| > r, $ $ | a(x) | \le C |x|^\eta $ for some $ \eta < 1.$ 
\end{enumerate}
\end{ass}
\begin{ass}\label{ass:erg-poly}
{\bf Polynomial rate.} There exist $ \gamma > \sigma_1 / 2 , r > 0 $ and $ m \in ] 2, 1 + \frac{ 2 \gamma}{\sigma_1^2}[, $ such that for all $x$ satisfying $ |x| > r, $ we have $ x b(x) \leq - \gamma$  and  $ | x + a (x) |^m - |x|^m  \le  0.$
\end{ass}
\subsection{Markovian framework}
Our study relies on the general theory of Markov processes. To be able to work within a Markovian framework, we impose a special structure on the interaction functions and suppose that 
\begin{ass}\label{ass:markov}
For all $ 1 \le i, j \le M, $
\begin{equation}\label{eq:defh}
h_{ij } ( t) = c_{ij} e^{- \alpha_{ij} t }, c_{ij} \in \R , \alpha_{ij} > 0. 
\end{equation}
\end{ass}
In this case we may introduce the auxiliary Markov process $ Y = Y^{(ij)} ,$ 
\begin{equation}\label{eq:aux}
 Y_t^{(ij)} = c_{ij } \int_0^t e^{- \alpha_{ij} (t-s)} d N_s^{(j) } + c_{ij } \int_{-\infty}^0 e^{- \alpha_{ij} (t-s)} d n_s^{(j) }  , 1 \le i, j \le M .
\end{equation}
The intensities can be expressed in terms of sums of these Markov processes, that is, for all $ 1 \le i \le M,$ 
$$   \lambda_t^{(i ) } = f_i \left(  \sum_{j=1}^M Y^{(ij)}_{t-} \right) . $$
Notice that  
\begin{equation}\label{eq:gen}
Y_0^{(ij)} = y_0^{(ij)} =  c_{ij } \int_{-\infty}^0 e^{ \alpha_{ij} s} d n_s^{(j) } , \; \; \mbox{ and } \; \;  d Y_t^{(ij )} = - \alpha_{ij } Y_t^{(ij)} dt + c_{ij} d N_t^{(j)}.
\end{equation}
In particular, the process 
$$(X_t, Y_t^{(ij)}, 1 \le i, j \le M ) 
$$
is a Markov process. Its longtime behavior is determined on the one hand by the longtime behavior of the underlying Hawkes process $N_t$ and on the other hand on the one of the continuous diffusion process with drift $b$ and diffusion coefficient $\sigma.$ Note also that $N_t $ is an autonomous process, that is, $ N_t$ does not depend on $X.$

\subsection{First properties of the process $ (X, Y) $ and an associated stochastic flow.}\label{sec:useful}
In the sequel, we shall denote the whole process by $ Z_t := ( X_t, Y_t) .$ It takes values in $ \R \times \R^{M \times M}. $  We write $ (P_t)_{t \geq 0 } $ for its associated transition semigroup and $ z = ( x, y ) , y = (y^{(ij)})_{1 \le i, j \le M} $ for generic elements of its state space.  
\begin{prop}\label{prop:Feller}
Grant Assumptions \ref{ass:0} and  \ref{ass:coeff}. Then the process $ Z$ is a Feller process, that is, $ P_t \Phi  $ belongs to $ C_b ( \R \times \R^{M \times M}  ) $ whenever $\Phi  \in C_b ( \R \times \R^{M \times M}  ).$
\end{prop}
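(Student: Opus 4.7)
The plan is to establish the Feller property by proving that for any $\Phi \in C_b(\R \times \R^{M \times M})$ and any $t \ge 0$, the map $z \mapsto P_t \Phi(z) = \E[\Phi(Z_t^z)]$ is bounded and continuous, where $Z^z$ denotes the process started at $z = (x, y)$. Boundedness is immediate from $|P_t\Phi(z)| \le \|\Phi\|_\infty$, so the real task is continuity.

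To get continuity, I would realize the whole family $(Z^z)_z$ on a single probability space via a Poisson thinning representation of the Hawkes part. Let $\pi^{(j)}, 1 \le j \le M,$ be independent Poisson random measures on $\R_+ \times \R_+$ with Lebesgue intensity, independent of $W$. Starting from $z = (x, y)$, define
\[
N_t^{(j),z} = \int_0^t \int_0^\infty \mathbf{1}_{\{u \le \lambda_s^{(j),z}\}} \pi^{(j)}(ds, du), \qquad \lambda_s^{(j),z} = f_j\Bigl(\sum_{i=1}^M Y_{s-}^{(ji),z}\Bigr),
\]
with $Y^{(ji),z}$ given by \eqref{eq:aux} starting from $y^{(ji)}$, and plug the resulting $N^z$ into \eqref{model} driven by the common Brownian motion $W$ and initial condition $x$. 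This produces a pathwise construction of $Z^z$ for every $z$ on the same space.

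Next I would show $Z_t^{z_n} \to Z_t^z$ in probability whenever $z_n \to z$. A routine Gr\"onwall argument based on the Lipschitz continuity of $b, \sigma$ (Assumption \ref{ass:coeff}) controls the difference of the continuous parts between jumps, the deterministic exponential decay of $Y^{(ij),z}$ is clearly continuous in $y^{(ij)}$, and the Lipschitz continuity of the $f_i$ (Assumption \ref{ass:0}) propagates this to the intensities, giving $\lambda_s^{(j),z_n} \to \lambda_s^{(j),z}$ locally uniformly on the event that no jump has yet occurred. Under the common thinning, with probability tending to one as $n \to \infty$, exactly the same Poisson points are accepted up to time $t$ by both processes; on this good event, induction over the a.s.\ finitely many successive jump times, together with continuity of $a$ at each jump (Assumption \ref{ass:coeff}(1)), yields $Z_t^{z_n} \to Z_t^z$. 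Dominated convergence applied to $\Phi(Z_t^{z_n})$ then gives $P_t\Phi(z_n) \to P_t\Phi(z)$.

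The main obstacle is controlling the discrepancy of jump events under the coupling: if a Poisson point $(s, u) \in \pi^{(j)}$ falls in the strip $\lambda_s^{(j),z} \wedge \lambda_s^{(j),z_n} < u \le \lambda_s^{(j),z} \vee \lambda_s^{(j),z_n}$, then exactly one of the two processes jumps at $s$, which can then produce a macroscopic gap in the trajectories. I would handle this by stopping at the first such discordant time $\tau_n$, bounding the expected number of discordant events on $[0,t]$ by $\sum_j \E\int_0^t |\lambda_s^{(j),z} - \lambda_s^{(j),z_n}| ds$, and showing via the Lipschitz estimates on $f_i$ (combined with the a priori integrability of $N_t^{(j),z}$) that $\P(\tau_n \le t) \to 0$. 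On the complementary event, the boundedness of $\Phi$ absorbs the error.
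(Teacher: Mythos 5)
Your argument is correct, but it follows a genuinely different route from the paper's. The paper also uses thinning, but in a different architecture: it dominates the total jump activity of all processes started in a compact set by a single auxiliary linear Hawkes process $N^*$ whose law does not depend on the starting point, conditions on the jump times of $N^*$, and accepts or rejects each candidate jump with state-dependent probabilities $q(y,t,\cdot)$; this produces one version $z_t(z,t_1^n,U_1^n)$ that is continuous in $z$ for fixed jump skeleton, and continuity of $P_t\Phi$ then passes through an explicit series representation by dominated convergence (with the by-product of a simulation algorithm, which is the paper's stated motivation for this proof). You instead keep the driving noise fixed (common Poisson random measures and common Brownian motion), compare two starting points directly, and show that the realized jump skeletons coincide with probability tending to one, so that $Z_t^{z_n}\to Z_t^z$ in probability; this avoids the dominating process $N^*$ and the compactness localization, and in fact proves the slightly stronger statement of continuity in probability of the flow $z\mapsto Z_t^z$, at the cost of having to control discordant thinning events. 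On that point, one refinement: the bound you display, $\sum_j \E\int_0^t|\lambda^{(j),z}_s-\lambda^{(j),z_n}_s|\,ds$, is not obviously small as written because after a discordance the intensities may differ macroscopically; the estimate should be taken up to $t\wedge\tau_n$, i.e.\ $\P(\tau_n\le t)\le \sum_j\E\int_0^{t\wedge\tau_n}|\lambda^{(j),z}_s-\lambda^{(j),z_n}_s|\,ds$, and before $\tau_n$ the gap is purely the deterministically decaying initial discrepancy, $|\lambda^{(j),z}_s-\lambda^{(j),z_n}_s|\le \gamma_j\sum_i e^{-\alpha_{ji}s}|y^{(ji)}-y_n^{(ji)}|$, since $N$ is autonomous (its intensity does not involve $X$) and the two processes have accepted the same points up to $\tau_n$; with this observation no a priori integrability of $N_t$ is even needed. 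The remaining ingredients you invoke (Gr\"onwall or, more conveniently, Kunita's continuous stochastic flow between jumps, continuity of $a$ at the finitely many jump times, dominated convergence) are exactly the deterministic-continuity inputs the paper also uses, so both proofs rest on the same structural facts about $Z$ as a piecewise continuous Markov process.
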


The proof of this result follows from classical arguments, see e.g.\ the proof of Proposition 4.8 in \cite{evaflow}, or \cite{ikeda1966}. 
In what follows we shall give an alternative proof relying on an explicit construction of the process $Z$ as a {\it stochastic flow}. This construction is interesting in its own right and based on the fact that $Z$ is a {\it piecewise continuous Markov process}, that is, a generalization of the piecewise deterministic Markov processes to those traveling in between successive jumps according to {\it stochastic flows} instead of deterministic ones, see \cite{ikeda1966}.  We start by giving the principal elements needed to construct this flow. \\

{\it The associated stochastic Brownian flow.} Thanks to our assumptions on $b$ and $ \sigma, $ by Theorem 4.2.5 of \cite{Kunita}, there exists a unique stochastic flow of homeomorphisms $ \Phi_{ t }(x)$ for $0  \le t < \infty , x \in \R $, such that
\begin{equation}\label{eq:flow}
  \Phi_{ t } (x) = x + \int_0^t b ( \Phi_{u } (x) ) du + \int_0^t \sigma ( \Phi_{ u }(x) ) d W_u .
\end{equation}
%such that for all $  t \geq 0, ~x \in \R , $ 
%$$$$
In particular, we have that 
$$
 (t,x) \mapsto \Phi_{t}(x) \mbox{  is continuous } (0 {\leq}t{<}{\infty}, ~x \in \R) . $$
The stochastic flow $ \Phi $ describes the evolution of $ X$ in between successive jumps of $X$ (or, equivalently, of $N$). 

{\it The associated deterministic flow.} In between successive jump events of $N, $ the process $Y$ evolves according to the deterministic flow 
\begin{equation}\label{eq:flowygrand}
 \varphi_t (y) =  ( \varphi^{(ij)}_t ( y))_{1 \le i,j \le M}  , \; \mbox{ where } \varphi_t^{(ij)} (y ) = e^{ - \alpha_{ij} t} y^{(ij)} , 1 \le i ,j \le M.
\end{equation}

We are now ready to give the simulation algorithm for $Z$.

{\it Simulation algorithm for $Z.$} 
We propose a simulation algorithm for $Z$ for any family of starting configurations $z \in K_1 \times K_2, $ where $ K_1 \subset \R , $ $K_2 \subset \R^{M \times M} $ are compact sets. The first step is to construct an upper bound $ \bar N_t $ on the number of jumps of $Z, $ that is, of $N,$ during some finite time interval $ [0, t ].$ To do so, observe that by Lipschitz continuity of $f_i$ with Lipschitz constant $\gamma_i , $ we have for $1 \leq i \leq M$,
$$ f_i ( x) \le f_i ( 0 ) + \gamma_i |x| .$$
Let
\begin{equation}\label{cbar}
\bar \gamma := \max_i \gamma_i, \; 
\bar c := \sup_{1 \le i, j \le M  } |c_{i j}|  .
\end{equation}

%synaptic weight 
Consider now the one-dimensional auxiliary linear Hawkes process $N_t^* $ having intensity $\lambda_t^*, t \geq 0,  $ where  
$$ \lambda_0^* :=  \sup_{ y \in K_2}  \sup_{ t \geq 0} \sum_{ i=1}^Mf_i (\sum_{j=1}^M e^{- \alpha_{ij } t} y^{(ij)} ) $$
and 
$$  \lambda_t^* = \lambda_0^* +M \bar \gamma \bar c \int_0^{t-}  dN_u^*  =   \lambda_0^*  + M \bar \gamma \bar c N^*_{t-} , ~ t \geq 0 .$$
Observe that $ N^* $ is a one dimensional linear Hawkes process with memory kernel $ \bar h ( t) = M \bar \gamma \bar c \one_{ \R_+} ( t) .$ 
Obviously, for all $t \geq 0, $ 
$$ \sum_{i=1}^M N_t^{(i) } \le  N_t^* . $$ 
Moreover, for all $t \geq 0,  N_t^* < \infty $ almost surely. In what follows, we shall write $T_1^* <  T_2^* < \ldots <  T_n^* < \ldots $ for the successive jump times of $ N^*.$ 

We work conditionally on the realization of $ N^* $ on $ [0, t ], $ that is, on the event $ \{  N_t^* = n \} , $ and on a realization of the associated jump times $0 < t_1 < t_2 \ldots < t_n < t.$  Our goal is to construct a version of $ Z, $ conditionally on these choices, which is continuous in the starting point $z.$ This construction relies on the classical thinning method, also known as acceptance-rejection method. 

During this construction,  we choose successively random variables $ U_1, \ldots, U_n  $ taking values in $ \{ 0, 1 , \ldots, M \} $ and define a process $z_s =  z_s ( z, t_1^n , U_1^n ) ,$ depending on these choices, for $ 0 \le s \le t.$ Here, $ t_1^n = (t_1, \ldots, t_n) $ denotes the choices of the successive jump times and $U_1^n = (U_1, \ldots , U_n ) $ the sequence of indices of jumping particles. This process is defined recursively as follows. Firstly, we put 
$$ z_s = (x_s, y_s) =   (\Phi_s ( x ) , \varphi_s ( y ) )  \mbox{ for all } 0 \le s < t_1 .$$
Then, conditionally on $y_{t_1 -}  = y_1 , $ we choose a random variable $ U_1 \in \{ 0 , 1, \ldots, M \} $ with law $ q ( y_1, t_1, \cdot ) , $ where   
\begin{equation}\label{eq:probaq}
 q( y , t,  \{ i \} ) =  \frac{ f_i ( \sum_j y^{(ij)}) }{  \lambda^*_{t} } , ~1 \le i \le M,~ q(  y  ,t, \{ 0 \} ) = 1 - \sum_{ i=1}^M q (  y  ,t,\{i\} ).
\end{equation} 
$U_1$ gives the label of the particle (or component) that will jump at time $t_1,$ if $ U_1 = 0, $ no jump happens at this time. More precisely, on $\{ U_1 \geq 1 \}, $ we put 
$$x_{t_1 } := x_1 + a (x_1) , \; y^{(ij )}_{t_1} := y^{(ij)}_1 +c_{ij} \one_{\{  U_1 = j \}}  , \mbox{ for all } 1 \le i, j \le M , $$
and $ z_{t_1} := z_1 $ on $ \{ U_1 = 0 \} .$ 

Then we put
$$x_s =  \Phi_{s  - t_1} ( x_{t_1} ) ,  y_s = \varphi_{s-t_1} ( y_{t_1} )  \mbox{ for all } t_1 \le s < t_2 , $$
and we proceed iteratively by choosing, conditionally on $y_{t_2 -}  = y_2, $ a random variable 
$$ U_2 \sim q ( y_2, t_2, \cdot ) ,$$
and so on. Finally, we obtain a terminal value $x_t  = \Phi_{t - t_n } ( x_{t_n} ) $ and $ y_t = \varphi_{t - t_n } ( y_{t_n} ).$   It is easy to check that 
$$ {\mathcal L} ( z_t ( z,t_1^n ,  U_1^n  )) = {\mathcal L} ( Z_t |  N_t^* = n ,  T_1^* = t_1, \ldots,  T_n^* = t_n) .$$
The important point is that the above construction ensures the continuity of the application
$$ z \mapsto z_t ( z, t_1^n ,  U_1^n  ).$$

\begin{proof}[Alternative proof of Proposition \ref{prop:Feller}]
Noticing that the law of $ N^* $ does not depend on the starting point $z$ but only on an upper bound of the associated intensities, the above construction implies that for any $ \Phi \in C_b ( \R \times \R^{M \times M}) , $ the mapping  
$$ K \ni z \mapsto P_t \Phi  (z) \in \R $$
is continuous, 
implying the assertion of Proposition \ref{prop:Feller}. Indeed, we can write 
\begin{multline*}
 P_t \Phi ( z) = \sum_{n \geq 0} \P( N_t^* = n )   \int_{[0, t ]^n } \mathcal{L} ( T_1^*, \ldots, T_n^* | N_t^* = n ) (dt_1, \ldots, dt_n) \\
\sum_{i=1}^n \sum_{u_i = 0}^M \P ( U_1 = u_1, \ldots, U_n = u_n | T_1 = t_1, \ldots, T_n = t_n , Z_0 = z )  \E^W  \Phi ( z_t (z, u_1^n ) ) ,
\end{multline*}
where the last expectation $ \E^W$ is only taken with respect to the underlying Brownian motion $W.$ Suppose now that $( z^k)_k \subset K_1 \times K_2 $ is a sequence of initial configurations converging to $ z $ as $k \to \infty.$ Then, by the structure of \eqref{eq:probaq}, 
\begin{multline*}
 \P ( U_1 = u_1, \ldots, U_n = u_n | T_1 = t_1, \ldots, T_n = t_n , Z_0 = z^k ) \to \\
  \P ( U_1 = u_1, \ldots, U_n = u_n | T_1 = t_1, \ldots, T_n = t_n , Z_0 = z ),
\end{multline*}   
as $ k \to \infty ,$  by continuity of  $ f_i, 1 \le i \le M,$ and of $ z \mapsto z_s ( z, t_1^n ,  U_1^n  ) ,$ for all $ s \le t.$    Moreover, using dominated convergence,  $ \E^W  \Phi ( z_t (z^k , u_1^n ) ) \to \E^W  \Phi ( z_t (z, u_1^n ) ),$  as $ k \to \infty, $ implying that $ P_t \Phi (z^k ) \to P_t \Phi (z) ,$ as $k \to \infty, $ where we have used dominated convergence once more.
\end{proof}

\section{Ergodicity of the process $ (X, Y)$}\label{sec:proba}
In this  section, we start proving the positive Harris recurrence of $Z_t= (X_t , Y_t)$ using the regeneration technique.

\subsection{A D\"oblin type lower bound}

The following theorem is the main result of this section. It states a local D\"oblin lower bound for the transition semigroup of the process, which is the main ingredient towards ergodicity of the process $(Z_t)$. 

\begin{theo}\label{thm:Doeblin}
Grant Assumptions \ref{ass:coeff}  and  \ref{ass:markov} and suppose moreover that $ f_i ( x) > 0 $ for all $ 1 \le i \le M, $ for all $ x \in \R.$ Then there exists $ T > 0 $ such that for all $z^* = (x^*, y^*)  \in  \R \times \R^{M \times M }$ and for all $ x^{**} \in \R $ the following holds. There exist $R > 0 , $ open sets $ I_1 \subset \R $ and $I_2 \subset \R^{M \times M} $ with strictly positive Lebesgue measure such that 
\begin{equation}
  x^{**} \in I_1,
\end{equation}    
and a constant $\beta \in (0, 1), $ depending on $I_1, I_2, R$ and the coefficients of the system with  
\begin{equation}\label{doblinminorization}
P_{T} (z , dz' ) \geq \beta \one_C  (z) \nu ( dz') ,
\end{equation} 
where $ C = B_R ( z^* ) $ is the (open) ball of radius $R$ centred at $z^* ,$ and where $ \nu $ is the uniform probability measure on $  I_1 \times I_2 .$ 
\end{theo}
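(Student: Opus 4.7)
The plan is to build the lower bound by forcing a carefully chosen jump pattern that simultaneously (i) gives $M^2$ degrees of freedom to spread the law of $Y_T$ on an open subset of $\R^{M\times M}$, and (ii) leaves enough Brownian freedom to put $X_T$ near any prescribed target $x^{**}$. Since $Y$ has $M^2$ coordinates while each jump of component $j$ perturbs only the $M$-vector $(Y^{(ij)})_{1\le i\le M}$, one needs $M$ jumps of each component. The natural setup is thus to partition $[0,T]$ into $M$ disjoint subintervals $J_1,\ldots,J_M$, and work on the event $\mathcal{E}$ that exactly $M$ jumps of component $N^{(j)}$ occur in $J_j$ and no other jumps occur there. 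Using the thinning representation of Section \ref{sec:useful} and the assumption $f_i>0$, one sees that $\P(\mathcal{E}\mid Z_0=z)$ is bounded below uniformly for $z\in B_R(z^*)$, and that the conditional law of the jump times on $\mathcal{E}$ dominates a positive multiple of Lebesgue measure on a suitable open subset of $(J_1)^M\times\cdots\times(J_M)^M$.

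Conditionally on $\mathcal{E}$ and on the jump times $(s_{j,k})_{1\le j,k\le M}$ (with $s_{j,k}$ the $k$-th jump of component $j$ in $J_j$), the $Y$-part at time $T$ is deterministic and given by
\[
Y^{(ij)}_T = e^{-\alpha_{ij}T}\,y^{(ij)} + c_{ij}\sum_{k=1}^M e^{-\alpha_{ij}(T-s_{j,k})}.
\]
For each fixed $j$, the Jacobian of the map $(s_{j,k})_k\mapsto(Y^{(ij)}_T)_i$ equals $(c_{ij}\alpha_{ij}e^{-\alpha_{ij}(T-s_{j,k})})_{i,k}$, which after row-scaling is a generalized Vandermonde matrix in the $e^{\alpha_{ij}s_{j,k}}$; under the (implicit) non-degeneracy that $c_{ij}\ne 0$ and that the $\alpha_{ij}$ are pairwise distinct in $i$ for each fixed $j$, its determinant is non-zero on an open set of admissible configurations and is locally bounded away from $0$. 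Applying the change of variables jointly over $j=1,\ldots,M$ pushes the lower bound on the jump-time density to a lower bound for the marginal density of $Y_T$ on an open set $I_2\subset\R^{M\times M}$.

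For the $X$-coordinate, I would condition additionally on $Y_T$ (equivalently, on the jump times). Between successive jumps, $X$ follows the continuous diffusion driven by $W$ with coefficients $b,\sigma$, and at each jump time is translated by $a(\cdot)$. By Assumption \ref{ass:coeff}, in particular the uniform ellipticity $\sigma_0\le\sigma^2\le\sigma_1$ together with the regularity of $b,\sigma$, one may invoke the Aronson-type two-sided Gaussian estimate alluded to in \eqref{eq:aronson}: the conditional transition density of the non-jumping diffusion over any fixed interval is bounded below by a strictly positive Gaussian kernel. Pasting these bounds across the $M^2$ inter-jump intervals and applying the support theorem provides, for every target $x^{**}\in\R$, an open neighbourhood $I_1\ni x^{**}$ and a strictly positive lower bound for the conditional density $p(z,T,x,y)$ of $X_T$, uniform in the jump times within the chosen open set of configurations and uniform in the starting point $z\in B_R(z^*)$. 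Integrating this conditional bound against the jump-time lower bound obtained in the previous paragraph yields \eqref{doblinminorization} with $\nu$ the uniform probability on $I_1\times I_2$ and a constant $\beta>0$ depending only on $R,I_1,I_2$ and the model coefficients.

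The hardest step will be the joint quantitative control: one must choose the open set of jump-time configurations small enough that the Vandermonde Jacobian and the Aronson lower bound are both bounded below by explicit constants, yet large enough (and placed so) that the associated set $I_2$ has non-empty interior and that $I_1$ can be made to contain the arbitrary prescribed point $x^{**}$. The stochastic flow decomposition from Section \ref{sec:useful} is essential here: because $W$ is independent of the jump mechanism, once we condition on $\mathcal{E}$ and the jump times, the density of $X_T$ and the Jacobian for $Y_T$ can be analysed separately, which is what makes the simultaneous lower bound tractable.
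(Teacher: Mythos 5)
Your proposal follows essentially the same route as the paper's proof: force $M$ jumps of each component before time $T$, use the block-diagonal Vandermonde-type Jacobian of the map from jump times to $Y_T$ (under the non-degeneracy $c_{ij}\neq 0$, $\alpha_{ij}$ pairwise distinct in $i$, which the paper reduces to without loss of generality) to spread a Lebesgue density on an open set $I_2$, and then combine the Gaussian lower bound \eqref{eq:aronson} with the support theorem to localize the diffusion and obtain a uniform density lower bound on a neighbourhood $I_1$ of the arbitrary target $x^{**}$. The only cosmetic difference is that the paper applies the Aronson bound solely on the last inter-jump interval, handling the earlier intervals by a positive-probability localization of $X_{T-s_{M^2}}$ in a compact set, rather than pasting Gaussian bounds across all $M^2$ intervals.
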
   

\begin{rem}
1. Together with a Lyapunov argument implying that the process $Z$ comes back to the set $C$ infinitely often, the above result will imply the $\nu-$irreducibility of the sampled chain $ (Z_{kT})_{ k \geq 0 }.$ We want to stress that the above construction implies  that for any given $ x^{**} \in \R $ and some $\varepsilon > 0 $ we can construct $ \nu $ such that $ B_\varepsilon (x^{**} ) $ lies in the support of the projection on the $x-$variable of $ \nu.$ Of course, this property is related to the support properties of the underlying diffusion process, granted by Assumption \ref{ass:coeff}. It will imply that the invariant density of $X$ is bounded away from $ 0 $ on each compact set, see Proposition \ref{prop:supportpi} below.\\
2. A lower bound of the type \eqref{doblinminorization} is trivial if we are only interested in the transitions of the process $X.$ Indeed, in this case it is sufficient to consider the event where no jump has appeared up to time $T$ and to use known lower bounds on the transition densities of the diffusion part.  However, since $X$ is not Markov on its own, we do need to work with the couple of processes $(X, Y) , $ and therefore have to establish such lower bounds for the joint transition of $ X$ and $Y.$ 
\end{rem}

The main idea of the proof is to use the jump noise of $M^2$ successive jumps of the underlying Hawkes process to create firstly a Lebesgue density for the process $Y. $ Such ideas have already been exploited in \cite{Clinet} and in \cite{DLO16}. The important  second step is then to use density estimates of the underlying diffusion to prove that a joint Lebesgue density of $ (X_T,Y_T) $ exists which is strictly lower-bounded on $ C.$ 

The proof is done in several steps which are the subject of the next subsection. 

\subsection{Some useful properties of the underlying stochastic flow and proof of Theorem \ref{thm:Doeblin}}
We start by collecting useful properties of the stochastic flow governing the evolution of $X$ in between successive jumps. \\
{\it Transition densities.}
Due to our assumptions on $b$ and $ \sigma$, the stochastic flow $ \Phi_t ( x) $ given by Equation \eqref{eq:flow}, possesses a transition density $x \mapsto p_t ( x, y ) $ with explicit lower bounds. More precisely, Proposition 1.2 of \cite{gobet} implies that,  for some suitable constants $c, C, $ 
\begin{equation}\label{eq:aronson}
c^{-1}  t^{- 1/2} e^{- C (x-y)^2 /t}   e^{- C t x^2 }  \le  p_t (x, y ) \le c t^{- 1/2} e^{- \frac{1}{C} (x-y)^2 /t} e^{ C t x^2 } .
\end{equation}
Here, the constants $c$ and $C$ do only depend on the coefficients $ b $ and $\sigma .$ \footnote{It is possible to replace our Condition \ref{ass:coeff} by any other condition ensuring that $p_t (x,y ) $ is strictly lower bounded on compact sets for any $t > 0 $ .} 

{\it Support properties.} We will use the control theorem which goes back to Stroock and Varadhan (1972) \cite{StrVar-72}, see also Millet and Sanz-Sole (1994) \cite{MilSan-94}, Theorem 3.5. For some time horizon $T_1<\infty$ which is arbitrary but fixed, write $\,\tt H\,$ for the Cameron-Martin space of measurable functions ${\tt h}:[0,T_1]\to \R $ having absolutely continuous components ${\tt h} (t) = \int_0^t \dot{\tt h} (s) ds$ with $\int_0^{T_1}(\dot{\tt h})^2(s) ds < \infty .$ For $x\in \R $ and ${\tt h}\in{\tt H}$, consider the deterministic flow
\begin{equation}\label{generalcontrolsystem}
(\varphi_t^{( {\tt h}) } (x) )_{t \geq 0 }  \; \; \mbox{solution to}\; \; d \varphi^{( {\tt h}) }_t (x) = \tilde b (  \varphi^{( {\tt h}) }_t (x) ) dt + \sigma( \varphi^{( {\tt h})_t } (x) ) \dot{\tt h}(t) dt, \; \mbox{with $\varphi_0^{( {\tt h })} (x)=x,$}  
\end{equation}
on $[0, T_1 ].$ In the above formula \eqref{generalcontrolsystem}, $\tilde b$ is Stratonovich drift given by 
$$ \tilde b ( x) = b(x) - \frac12 \sigma (x) \sigma' ( x) .$$ 
Denote by $ Q_x^{T_1} $ the law of the solution $ ( \Phi_t (x) )_{0 \le t \le T_1} .$ Then for any $ x \in \R  $ and  $  {\tt h } \in \tt H ,$ 
\begin{equation}\label{eq:support}
 \left(\varphi_t^{({\tt h}  ) } (x) \right)_{|t \in  [0,  T_1 ] } \in \overline{ {\rm supp} \left( Q_x^{T_1 } \right)}.
\end{equation}

%\subsection{Proof of Theorem \ref{thm:Doeblin}}
We are now able to give the proof of Theorem \ref{thm:Doeblin}. 

\begin{proof}
We suppose without loss of generality that for all $ 1 \le j \le M, $ 
$$ c_{ij} \neq 0 \mbox{ for all } i , \; \alpha_{ij } \neq \alpha_{kj},  \mbox{  for all $ 1 \le i, k \le M, i \neq k .$} \quad \footnote{Otherwise, if $ \alpha_{ij} = \alpha_{kj}, $ we introduce the new process $ Y_t^{ (i+k),  j} := Y_t^{ij} + Y_t^{kj} $ which is Markov again.}   $$

In what follows, we impose first $M$ consecutive jumps of $ N^{(1)}, $ followed by $M$ jumps of $ N^{(2)} $ etc up to $M$ consecutive jumps of $ N^{(M)}, $ and we suppose that they all happen before time $T.$  
Then we can lower-bound, for all $ A \in {\cal B} ( \R ), B \in  {\cal B} ( \R^{M\times M }),$ $z \in \R \times \R^{M\times M},$ 
\begin{multline*}
 P_T ( z, A\times B ) \geq  \E_z \left( \one_A (X_T) \one_B  ( Y_T), N^{(i)}_T =M, \forall 1 \le i \le M, \right.\\
\left. T_1^{(1)} < T_2^{(1)} < \ldots < T_M^{(1)} < T_1^{(2)} < \ldots < T_M^{(2)} < \ldots < T_1^{(M)} < \ldots  T_M^{(M)} < T \right) ,
\end{multline*} 
where $ T_k^{(i) }, k \geq 1, $ are the successive jump times of $ N^{(i)}, 1 \le i \le M.$  In what follows we shall write 
\begin{multline*}
 A_T :=   \{N^{(i)}_T =M,~ \forall 1 \le i \le M, \\
  T_1^{(1)} < T_2^{(1)} < \ldots < T_M^{(1)} < T_1^{(2)} < \ldots < T_M^{(2)} < \ldots < T_1^{(M)} < \ldots  T_M^{(M)} < T \} .
\end{multline*}   

{\it Step 1.} We first deal with the process $ Y_t .$ This part of the proof follows well-known arguments that have been developed independently from our work in \cite{Clinet}, and that we have also already used in \cite{DLO16} and \cite{evaSPA}. We reproduce the main arguments here since the second step will be a control of the diffusion part conditionally on the results of this first step.

Recall that in between successive jump events of $N, $ the process $Y$ evolves according to the deterministic flow $\varphi_t (y) $ introduced in \eqref{eq:flowygrand} above. Thus, on the event $A_T , $ starting from $Y_0 = y \in \R^{M \times M}  , $ we first let the flow $\varphi $ evolve starting from $ y  $ up to some first jump time $t_1 .$ At that jump time each particle having index $ (i1) $ gains an additional value $c_{i1} .$ We then successively choose the following inter-jump waiting times $t_2, \ldots, t_{M^2}$ under the constraint $ t_1 + \ldots + t_{M^2} < T .$ We write 
$$ s_1= T- t_1, s_2= T - (t_1 + t_2) , \ldots,~ s_{M^2}  = T - (t_1 + \ldots + t_{ M^2}) .$$ 

Conditionally on $Y_0=y$, the successive choices of ${\bf s}=(s_1, \ldots , s_M^2 )$ as above, the position of $ Y^{(ij)}_T $ is given by 
\begin{equation*}%\label{def:flowofZ}
\gamma^{(ij )} (y,  {\bf s})= e^{ - \alpha_{ij} T} y^{(ij)} +c_{ij} [  e^{- \alpha_{ij}   s_{ jM - M +1} } +\ldots +  e^{- \alpha_{ij}  s_{ jM}} ] .
\end{equation*}

In what follows we work at fixed $ y$ and we write  
$$ \gamma_{ y } :    {\bf {s}} \mapsto \gamma (y,  {\bf {s}}) = ( \gamma^{(11)}, \ldots , \gamma^{(M1)}, \gamma^{(12)}, \ldots , \gamma^{(M2)}, \ldots ,\gamma^{(1M)}, \ldots ,  \gamma^{(MM)} ) (y,  {\bf {s}}). $$

We will use the jump noise which is created by the $M^2$ jumps, i.e., we will use a change of variables on the account of $s_1, \ldots, s_{M^2} .$ 
Therefore, let 
$$
\frac{\partial \gamma_{y}({\bf {s}})}{\partial {\bf {s}}}=\Big[\frac{\partial \gamma_{y }({\bf {s}})}{\partial s_1},\ldots, \frac{\partial \gamma_{y}({\bf {s}})}{\partial s_{M^2}} \Big]
$$ 
be the Jacobian matrix of the the map ${\bf s}\mapsto \gamma_{y}({\bf {s}})$.  
This matrix does not depend on the initial position $y.$  Indeed, one easily finds that
$$
\frac{\partial \gamma_{y}({\bf {s}})}{\partial {\bf {s}}}=C ( {\bf {s}} ) =   \left( 
\begin{array}{ccccc}
C^{(1)} & {\bf 0} & \cdots &  {\bf 0} &  {\bf 0} \\
{\bf 0} & C^{(2)} &   {\bf 0}  & \cdots &   {\bf 0} \\
\vdots & \vdots & \vdots & \ddots & \cdots \\
  {\bf 0} & \cdots &   {\bf 0}  &  {\bf 0}    & C^{(M)} 
  \end{array} 
\right)  ( {\bf {s}} ),
$$ 
where for each $1\leq j\leq M$, $C^{(j)}( {\bf {s}} ) $ is the  $M \times M$ matrix given by 
$$
C^{(j)}( {\bf {s}} ) =- 
\left( \begin{array}{ccc }
 c_{1j} (\alpha_{1j})^{-1}  e^{ - \alpha_{1j} s_{jM - M+1} } & \ldots &  c_{1j} (\alpha_{1j})^{-1}  e^{ - \alpha_{1j} s_{jM } } \\
c_{2j} (\alpha_{2j})^{-1}  e^{ - \alpha_{2j} s_{jM - M+1} } & \ldots &  c_{2j} (\alpha_{2j})^{-1}  e^{ - \alpha_{2j} s_{jM } } \\
\vdots & \vdots & \vdots  \\
c_{Mj} (\alpha_{Mj})^{-1}  e^{ - \alpha_{Mj} s_{jM - M+1} } & \ldots &  c_{Mj} (\alpha_{Mj})^{-1}  e^{ - \alpha_{2j} s_{jM } } \end{array}
\right).
$$
We have to prove that each $C^{(j)}  ( {\bf {s}}) $ is invertible. This is difficult for general choices of ${\bf s } .$ But if we choose $ t_0$ such that $M^2 t_0 < T $ and then $ s^*_1 = M^2 t_0 , s^*_2 = (M^2 - 1) t_0, \ldots, s^*_{M^2 } = t_0 $ such that $ s_k - s_{k+1} = t_0 $ for all $ 1 \le k < M^2,$ then 
$$ \det C^{(j)} ( {\bf s^* } ) = \left( \prod_{i=1}^M c_{ij} (\alpha_{ij})^{-1} e^{- \alpha_{ij} s^*_{jM } }\right)  \det V^{(j)} ( t_0 ) ,$$
where 
$$ V^{(j)} (t_0) = \left( 
\begin{array}{ccccc}
e^{- (M-1) \alpha_{1j} t_0 }   &  \ldots  & e^{- 2 \alpha_{1j} t_0 } &  e^{- \alpha_{1j} t_0} & 1 \\
e^{- (M-1) \alpha_{2j} t_0 }   &  \ldots  & e^{- 2 \alpha_{2j} t_0 } &  e^{- \alpha_{2j} t_0} & 1 \\
\vdots & \vdots & \vdots & \vdots & \vdots \\
e^{- (M-1) \alpha_{Mj} t_0 }   &  \ldots  & e^{- 2 \alpha_{Mj} t_0 } &  e^{- \alpha_{Mj} t_0} & 1
\end{array}
\right) .$$
Therefore, $ \det V^{(j)} (t_0) $ is a Vandermonde determinant which is different from $0$ since by assumption all $ e^{-  \alpha_{ij} t_0} $ and $ e^{ - \alpha_{kj} t_0 } $ are different, for all $ i \neq k.$ 

By continuity we therefore have that for any choice of $ t_0 < T/M^2 $ there exists $ \varepsilon > 0 $ such that for all $ {\bf s} \in B_\varepsilon ( {\bf s^*} ) , $ $\frac{\partial \gamma_{y}({\bf {s}})}{\partial {\bf {s}}} $ is invertible.

It will be proved now that this uniform invertibility of the Jacobian matrix of the map ${\bf s}\mapsto \gamma_{y }({\bf {s}})$ implies the first part of inequality \eqref{doblinminorization}. For that sake, we shall also need the following notation. For each couple $(y, {\bf {s}})$, we write $y_0 = y , $ and then define recursively for all $ jM - M + 1  \le k \le jM ,$ where $j$ varies between $ 1 \le j \le  M, $ 
\begin{equation}\label{eq:sautsy}
y^{(i \ell)}_k=\varphi^{(i\ell)} _{s_{k-1}-s_{k}}(y_{k-1})+   c_{i j} \one_{ \{ \ell = j \}} ,
\end{equation}
for all $ 1 \le i , \ell \le M, $ where we put $s_0 = T.$ 

The sequence $y_1,\ldots ,y_{M^2}$ corresponds to the positions of the process $Y$ right after the successive jumps, starting from the initial location $y\in \R^{M \times M},$ induced by  the inter-jump waiting times $T-s_1,s_1-s_2, \ldots , s_{M^2-1} -  s_{M^2} $ which are determined by ${\bf {s}}$. 

Introduce now for each $y\in \R^{M \times M}$
the total jump rate 
$$\bar f( y) := \sum_{ i=1}^M f_i \left( \sum_j  y^{(ij)}\right) $$
and for each $t\geq 0,$  the survival rate 
\begin{equation}
\label{def:suvival_rate}
 e(y,t)= \exp\Big\{-\int_{0}^t \bar f\big(\varphi_s^{}(y)\big)ds\Big\} .
\end{equation}
We  define  for each couple  
$(y,  {\bf s})$ (recall that $s_0=T$), 
\begin{equation}\label{densityat_x_c_s}
q_{y}({\bf {s}})=\left( \prod_{j=1}^{M} \prod_{k= jM - M}^{jM - 1 } \; f_j (   \varphi_{s_k-s_{k+1}}(y_k) )  \; e(y_k,s_k-s_{k+1}) \right) e(y_{M^2},s_{M^2}).
\end{equation}

Since $\bar f (y) > 0 $ for all $ y \in \R^{ M \times M}$ and from the definition of $e(\cdot, \cdot),$ we deduce that for any couple $(y^*,{\bf {s}^*})$ there are neighborhoods $W_{\bf {s}^*}$ and $U_{y^*}$ of ${\bf s}^* $ and ${y^*}$ respectively such that  
\begin{equation}
\label{lowerboundq}
\inf_{(y, {\bf s})\in U_{y^*}\times  W_{\bf {s}^*}} q_{ y }({\bf {s}})>0. 
\end{equation}   
Let us now fix $(y^*,{\bf {s}}^{*})$ such that the matrix $\frac{\partial \gamma_{y^*}({\bf {s}}^*)}{\partial {\bf {s}}}$ is invertible. By Lemma 6.2 of \cite{benaim2015}, there exist an open neighborhood $ {B}_{R}(y^*)$ of  $y^* , $ an open set $I_2 \subset \R^{M \times M }$, and for any $y\in  {B}_{R}(y^*) ,$ an open set $W_{y}$ such that
$$
\tilde{\gamma}_{y}({\bf {s}}):\left\lbrace
\begin{array}{l}
W_{y} \to I_2 \\
{\bf {s}} \mapsto \gamma_{y}({\bf {s}})
\end{array}
\right.
$$
is a diffeomorphism, with $W_{y}\subset W_{\bf {s}^*},$ and also
\begin{equation}
\label{lowerbounddet}
\inf_{y \in  {B}_{R}(y^*) } \inf_{\bf {s}\in W_{y} }\Big|\mbox{det}\Big(\frac{\partial \gamma_{y}({\bf {s}})}{\partial {\bf {s}}}\Big)^{-1}\Big|>0.
\end{equation}
Reducing (if necessary)   
$R$, we may assume also that $ {B}_{R}(y^*) \subset U_{y^*}$. Thus we have that by  \eqref{lowerboundq} and \eqref{lowerbounddet}, 
\begin{equation}
\label{lowerbounddet2}
 \beta_1 := \inf_{y\in  {B}_{R}(y^*) } \inf_{{s} \in W_{y}} q_{y}({\bf {s}}) \Big|\mbox{det}\Big(\frac{\partial \gamma_{y}({\bf {s}})}{\partial {\bf {s}}}\Big)^{-1}\Big|>0.
\end{equation}
Once we have done all these steps we can conclude with the following preliminary result. For any measurable
$B \in {\mathcal B}(\R^{M\times M})$ and for any $z = (x, y ) $ such that $y\in  {B}_R (y^*), $ using the change of variables $y=\tilde{\gamma}_{y}({\bf {s}})$, we obtain the lower bound
\begin{multline}\label{eq:tobe}
\E_y \left( \one_B ( Y_T) , N^{(i)}_T =M, \forall 1 \le i \le M, T_1^{(1)} < \ldots < T_M^{(1)} < T_1^{(2)} < \ldots < T_M^{(M)} < T\right) \\
\geq  \int_{W_{y}}  q_{y}({\bf {s}})\one_{B}(\tilde \gamma_{y}({\bf {s}}))ds_{1}\ldots ds_{M^2} 
\geq  \beta_1  \int_{I_2 \cap B}dy_1\ldots dy_{M^2}.
\end{multline}
This would establish the desired result if we were only interested in the autonomous process $Y.$ 

{\it Step 2.} We now deal with the process $X.$ Of course, we still work conditionally on the choice of $ s_1, \ldots, s_{M^2}$ of the first step. Analogously to \eqref{eq:sautsy} we therefore introduce the successive jump positions of the process $X$ which are given by $x_0 = x $ and then for all $ 1 \le k \le M^2, $
\begin{equation}\label{eq:sautsx}
x_k=\Phi_{s_{k-1}-s_{k}}(x_{k-1})+ a ( \Phi_{s_{k-1}-s_{k}}(x_{k-1})),
\end{equation}
where $s_0  = T $ as before. 

Conditionally on $X_0 = x , Y_0=y$, the successive choices of ${\bf {s}}=(s_1, \ldots , s_{M^2} )$ as above, the position of $ X_T $ is then given by 
\begin{equation*}%\label{def:flowofZ}
\Gamma  (z,  {\bf {s}})=  \Phi_{ s_{M^2}} ( x_{M^2}) ,
\end{equation*}
where $ z = (x, y ) .$ Notice that $ x_{M^2} $ depends on the choices of $ s_1, \ldots, s_{M^2} $ and of course on the evolution of the stochastic flow between the successive jump times.  

{\it Step 3.} Therefore, conditioning with respect to $ X_{T - s_{M^2}}  =x_{M^2} , $ we obtain 
\begin{multline}\label{eq:xm}
P_T ( z, A\times B ) \geq 
\E_z \left( \one_A (X_T) \one_B  ( Y_T), A_T  \right)x \\
 \geq  \int_{W_{y}}  q_{y}({\bf {s}}) \one_{B}(\tilde \gamma_{y}({\bf {s}})) \; \E \left( \int_A p_{s_{M^2}} ( x_{M^2}, u ) du \right) ds_{1}\ldots ds_{M^2},
\end{multline}
where $p_{s_{M^2}} ( x_{M^2}, u )$ is the transition density of \eqref{eq:aronson}. 

Notice that in the last line, expectation $ \E $ is taken with respect to Brownian motion only, that is, with respect to the law of $X_{T- s_{M^2}} $ under the Wiener measure, conditionally on the choices $ s_1, \ldots, s_{M^2}.$ 

The lower bound estimate given in \eqref{eq:aronson} implies that for fixed $s_{M^2}$ and $x_{M^2}, $  $ u \mapsto  p_{s_{M^2}} ( x_{M^2} , u )$ is lower bounded in small neighborhoods of $x_{M^2}.$ Therefore, in what follows we need to localize the possible positions of $x_{M^2}, $ conditionally on the choices of the jumps times $s_1, \ldots , s_{M^2}.$ 

{\it Step 4. Localization of $x_{M^2}.$} 

We apply the support theorem, that is, \eqref{eq:support}, to our process $X_t$ in between the successive jump times $ 0, T_1 := t_1,T_2 :=  t_1+ t_2, \ldots, T_{M^2} := t_1 + \ldots t_{M^2} , $ by choosing on each time interval $ [ T_n, T_{n+1}[ , n = 0, \ldots, M^2 - 1, $ the control function $ \tt h \equiv 0 .$ Consequently, introducing 
$$ \tilde x_1 := \varphi^{({\tt 0}  )}_{t_1} (x)  + a (\varphi^{({\tt 0}  )}_{t_1} (x)) , \ldots, \tilde x_{M^2} := \varphi^{({\tt 0}  )}_{s_{M^2-1} - s_{M^2} } (\tilde x_{M^2 - 1} )  + a (\varphi^{({\tt 0}  )}_{s_{M^2-1} - s_{M^2} } (\tilde x_{M^2- 1} )),$$
there exists an open neighborhood $ U_{\tilde x_M } $ of $ \tilde x_{M^2}, $ such that 
$$ \P ( X_{ T- s_{M^2}} \in  U_{\tilde x_{M^2} } ) > 0.$$ 
Notice that $ \tilde x_{M^2} $ is a continuous function of  the starting point $x$ and of $ {\bf {s} } ; $ that is, $   \tilde x_{M^2}  = F (x, {\bf  s} )  $ for some continuous function $F.$ This implies that for any starting point $x^* $ and for $R > 0 $ sufficiently small, reducing $W_{\bf {s}^*}$ if necessary, there exists a compact $ K = K ( x^*, {\bf  s^*} ) $ such that $ F (x, {\bf  s} ) \in K $ for all $ x \in B_R ( x^* ) $ and for all $ {\bf  s} \in W_{\bf {s}^*} ,$ whence 
$$ \inf_{ x \in B_R ( x^* ) } \inf_{  {\bf  s} \in W_{\bf {s}^*}} \P ( X_{T - s_{M^2}} \in J_1 ) > 0, $$
where 
$$ J_1 = \bigcup_{ \tilde x_{M^2} \in K} U_{\tilde x_{M^2}} .$$
Notice that $ J_1$ has compact closure.  

Let now $ x^{**} \in \R $ be arbitrarily chosen. The lower bound of \eqref{eq:aronson} implies that there exists an open interval $I_1 \ni x^{**},$  such that 
$$ \inf_{ x \in J_1 , y \in I_1, {\bf  s} \in W_{\bf {s}^*} } p_{s_{M^2}} ( x, y )  > 0.$$
Therefore,
\begin{equation}
\beta_2 :=  \inf_{ x \in B_R ( x^* ) }  \inf_{y \in  {B}_{R}(y^*) } \inf_{{s}\in W_{y} } \inf_{ u \in J_1, v \in I_1 }  p_{ s_{M^2} } ( u, v) \P_x  ( X_{T - s_M} \in J_1  )  > 0 .
\end{equation}

Therefore, coming back to \eqref{eq:xm}, for all $ z \in B_R ( x^*, y ^* ) , $ 
\begin{eqnarray*}
P_T ( z, A\times B ) &\geq & \E_z \left( \one_A (X_T) \one_B  ( Y_T), N^{(i)}_T =M, \forall 1 \le i \le M, T_1^{(1)} < T_2^{(1)} < \ldots < T_M^{(M)} < T \right) \\
& \geq &  \int_{W_{y}}  q_{y}({\bf {s}}) \one_{B}(\tilde \gamma_{y}({\bf {s}})) \; \E \left( \one_{\{  X_{T- s_{M^2}} \in J_1 \} }\int_A p_{s_{M^2}} ( X_{T- s_{M^2}} , u ) du \right) ds_{1}\ldots ds_{M^2}\\
& \geq &\int_{W_{y}}  q_{y}({\bf {s}})\one_{B}(\tilde \gamma_{y}({\bf {s}})) \; \E \left( \one_{\{  X_{T- s_{M^2}} \in J_1 \} } \int_{ A \cap I_1 } \inf_{ x \in J_1 } p_{s_{M^2}} ( x, u ) du \right) ds_{1}\ldots ds_{M^2} \\
&\geq & \beta_2 \lambda ( A \cap I_1 )  \int_{W_{y}}  q_{y}({\bf {s}})\one_{B}(\tilde \gamma_{y}({\bf {s}}))  ds_{1}\ldots ds_{M^2} \\
&\geq & \beta_2 \lambda ( A \cap I_1 )  \beta_1 \lambda^{M \times M} ( B \cap I_2 ) ,  
\end{eqnarray*}
where we have finally applied \eqref{eq:tobe}, and where $ \lambda $ and $\lambda^{M \times M}$ denote the Lebesgue measure on $ \R, $ $\R^{M \times M}, $ respectively. This implies the desired result putting 
$$ \beta := \beta_1 \beta_2 \lambda (I_1) \lambda^{M \times M}  ( I_2 ) \; \; \mbox{ and } \; \nu := \mathcal{U}_{ I_1  \times I_2  },$$
the uniform probability law on $  I_1 \times I_2 .$ 
\end{proof} 

\subsection{A Foster-Lyapunov type condition }

In order to prove the positive Harris recurrence of the process $ Z, $ we need of course a stability condition which is a Lyapunov type condition. 
Notice that the process $Z_t = ( X_t, Y_t)$ has the following extended generator, defined for sufficiently smooth test functions $g$ by 
\begin{eqnarray*}
\mathcal{A}^{Z}g(x,y)&=& - \sum_{i, j =1}^M \alpha_{ij}  y^{(ij)}\partial_{y^{(ij )}} g(x,y) + \partial_{x} g(x,y) b(x) + \frac{1}{2}  \sigma^2(x)\partial^2_{x} g(x,y)\\
&&
+\sum_{j=1}^{M} f_j \left( \sum_{k=1}^M y^{(jk)} \right) \left[g \left(x+a(x), y+\Delta_{j } \right)-g(x,y) \right] , 
\end{eqnarray*}
with $(\Delta_{j})^{(il)} =  c_{ij} \one_{\{ j=l\}} $ for all $ 1 \le i, l \le M.$ 

To obtain stability we first introduce the following classical stability assumption for Hawkes processes. Recall that $ \gamma_i$ denotes the Lipschitz constant of the rate function $f_i.$ 
\begin{ass}\label{ass:h}
Let $H $ be the $M \times M-$matrix with entries $H_{ij} = \gamma_j  \frac{|c_{ij} | }{ \alpha_{ij} }, 1 \le i, j \le M. $ Then its spectral radius satisfies 
$$ \rho := \rho ( H ) <1. $$ 
%\end{enumerate} 
\end{ass}

Under the above stability condition, let $ \kappa \in \R_+^M $ be a left eigenvector of $ H,$ associated to the eigenvalue $ \rho  $ and having non-negative components,  that is, for all $j, $ $ \sum_i \kappa_i H_{ij} = \rho \kappa_j  \geq 0 .$ 
Such a vector $ \kappa $ exists thanks to the theorem of Perron-Frobenius. 
Following \cite{Clinet}, we introduce a Lyapunov function by defining  $ m_{ij}= \frac{\kappa_i}{\alpha_{ij} } $ for all $ 1 \le i, j \le M$ and $ V : \R \times \R^{M \times M} \to \R_+ $ by 
\begin{equation}
V(x, y ) =V_1 (x)  + e^{ \sum_{i, j } m_{ij} | y^{(ij)} | } ,
\end{equation}
where $ V_1 : \R \to \R_+ $ will be chosen in the sequel. Notice that $ V (x, y ) \geq 1 $ for all $x, y .$

\begin{prop}\label{prop:lyapunov}
Grant Assumptions \ref{ass:coeff}, \ref{ass:erg-expo} (Exponential frame) and \ref{ass:h}. Let $ V_1 ( x) = x^2.$ Then there exist positive constants $d_1, d_2$ such that the following Foster-Lyapunov type drift condition holds
\begin{equation}\label{eq:lyapunov}
\mathcal{A}^Z V\leq  d_1 - d_2 V  .
\end{equation}
\end{prop}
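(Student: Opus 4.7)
The plan is to decompose $V(x,y) = V_1(x) + V_2(y)$ with $V_1(x)=x^2$ and $V_2(y) = \exp\bigl(\sum_{i,j}m_{ij}|y^{(ij)}|\bigr)$, apply $\mathcal{A}^Z$ to each piece, and add the results. Since the left eigenvector $\kappa$ of $H$ is only determined up to a positive multiplicative scalar, I retain the freedom to replace $\kappa$ by $\lambda\kappa$ with $\lambda>0$ to be chosen small; this degree of freedom is crucial for the $V_2$ computation. For $V_1$, a direct computation gives $\mathcal{A}^Z V_1(x,y) = 2xb(x) + \sigma^2(x) + \sum_j f_j(\sum_k y^{(jk)})[2xa(x)+a^2(x)]$. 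Using $xb(x)\leq -dx^2$ outside $\{|x|\leq r\}$ and $\sigma^2\leq \sigma_1$, the continuous part is bounded by $-2dx^2+\sigma_1$. Under case 1 of Assumption \ref{ass:erg-expo}, the bracket $2xa(x)+a^2(x)\leq 0$ for $|x|>r$, so the jump contribution is harmless outside a compact set. Under case 2, the bracket is $o(x^2)$ since $|2xa(x)+a^2(x)|\leq 2C|x|^{1+\eta}+C^2|x|^{2\eta}$; multiplying by $f_j\leq f_j(0)+\gamma_j\sum_k|y^{(jk)}|$ and applying Young's inequality $|x|^{1+\eta}|y|\leq \varepsilon x^2+C_\varepsilon|y|^{2/(1-\eta)}$ converts the cross terms into a small multiple of $x^2$ plus a polynomial $P(|y|)$. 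In both situations, $\mathcal{A}^Z V_1(x,y) \leq C_1 - c_0 x^2 + Q(y)$ where $Q$ is at most polynomial in $|y|$.

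For $V_2$: away from the coordinate hyperplanes $\{y^{(ij)}=0\}$ one has $\partial_{y^{(ij)}}V_2 = \lambda m_{ij}\operatorname{sgn}(y^{(ij)})V_2$; the nonsmoothness at the axes is harmless since $V_2$ is convex and Lipschitz (Meyer--It\^o, or regularize $|y^{(ij)}|$ by $\sqrt{(y^{(ij)})^2+\delta}$ and let $\delta\downarrow 0$). Using $\alpha_{ij}m_{ij}=\kappa_i$, the continuous drift produces $-\lambda V_2(y)\sum_{ij}\kappa_i|y^{(ij)}|$. The jump part equals $V_2(y)\sum_j f_j(\sum_k y^{(jk)})[e^{\lambda\delta_j(y)}-1]$, with $\delta_j(y):=\sum_i m_{ij}(|y^{(ij)}+c_{ij}|-|y^{(ij)}|)$ and $|\delta_j(y)|\leq \sum_i m_{ij}|c_{ij}|$. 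The left-eigenvector identity $\sum_i \kappa_i H_{ij} = \rho \kappa_j$ together with $H_{ij}=\gamma_j|c_{ij}|/\alpha_{ij}$ yields $\sum_i m_{ij}|c_{ij}| = \rho\kappa_j/\gamma_j$, so $|\lambda\delta_j(y)|\leq \lambda\rho\kappa_j/\gamma_j$ is as small as desired. Expanding $e^{\lambda\delta_j(y)}-1 = \lambda\delta_j(y) + O(\lambda^2)$ and combining with the Lipschitz bound on $f_j$, the jump part is bounded above by $V_2(y)\bigl[\lambda\rho\sum_{ij}\kappa_i|y^{(ij)}| + O(\lambda) + O(\lambda^2)\sum_{ij}|y^{(ij)}|\bigr]$ after rearranging indices. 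Summing with the drift gives $\mathcal{A}^Z V_2(y) \leq V_2(y)\bigl[-\lambda(1-\rho-o(1))\sum_{ij}\kappa_i|y^{(ij)}| + O(\lambda)\bigr]$, and for $\lambda$ small enough the bracket is strictly negative for large $|y|$, so $V_2(y)\sum_{ij}|y^{(ij)}|$ dominates $V_2(y)$ and yields $\mathcal{A}^Z V_2 \leq C_2 - c_1 V_2$ globally.

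Adding the two estimates produces $\mathcal{A}^Z V \leq C_1+C_2 - c_0 x^2 - c_1 V_2 + Q(y)$. The polynomial remainder $Q(|y|)$ and any linear-in-$|y|$ pieces are dominated by $V_2(y)$ for large $|y|$ (since $V_2\geq 1$ and grows exponentially), hence can be absorbed into a slightly reduced multiple of $V_2$, producing $\mathcal{A}^Z V \leq d_1 - d_2 V$ for appropriate $d_1,d_2>0$. The main obstacle is the joint tuning of the rescaling parameter $\lambda$ of $\kappa$, the Young parameter $\varepsilon$ (in case 2), and the final constants, so that all positive contributions -- the cross terms arising from case 2 of Assumption \ref{ass:erg-expo}, the $f_j(0)$ constants, and the linear- and polynomial-in-$|y|$ remainders -- are simultaneously dominated by the two negative sources provided by $xb(x)\leq -dx^2$ and by the spectral gap $1-\rho>0$.
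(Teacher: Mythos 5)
Your proposal is correct and follows essentially the same route as the paper: the same decomposition $V=V_1+V_2$, the same split of the generator, and the same case analysis for the $x$-part (nonpositive bracket $2xa(x)+a^2(x)$ in case 1, Young's inequality in case 2, leaving a remainder that is polynomial in $|y|$ and hence absorbed by the exponentially growing negative term $-c_1V_2$). The only difference is that you carry out the left-eigenvector/rescaling computation yielding $\mathcal{A}_2^Z V_2\le C_2-c_1V_2$ explicitly, whereas the paper obtains this bound by citing Proposition 4.5 of \cite{Clinet}.
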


\begin{proof}
We write $ V(x, y ) = V_1 (x) + V_2 (y ) ,$ where $ V_1 (x) =  x^2, $ $V_2 (y) = e^{ \sum_{i, j } m_{ij} |y^{(ij)} | } ,$ and $ \mathcal{A}^Z V = \mathcal{A}_1^Z V + \mathcal{A}_2^Z V , $ where 
$$ \mathcal{A}_1^Z V  (x, y ) =  2 x  b(x) +   \sigma^2(x) +\sum_{j=1}^{M} f_j \left( \sum_{k=1}^M y^{(jk)} \right) [ (x + a(x) )^2 - x^2 ] $$ 
is the diffusion part and 
$ \mathcal{A}_2^Z V  =  \mathcal{A}^Z V -  \mathcal{A}_1^Z V $ is the jump part of the generator. The arguments of the proof of Proposition 4.5 of \cite{Clinet} imply that 
\begin{equation}\label{eq:good}
 \mathcal{A}_2^Z V (x, y )  = \mathcal{A}_2^Z V_2 (y)  \le - c_1 V_2 (y) +  c_2 \one_{K_1} (y) , 
\end{equation} 
where $ c_1, c_2  > 0 $ and where $ K_1  \subset \R^{M \times M} $ is some compact. Moreover, writing for short $ \bar f(y) = \sum_i f_i ( \sum_j  y^{(ij)} ) $ for the total jump rate,
\begin{equation}\label{eq:jumpdiff}
 \mathcal{A}_1^Z V (x,y)  =  2 x b(x) + \sigma^2 (x) + ( 2 x a(x) + a(x)^2 ) \bar f (y) .
\end{equation} 
Thanks to Assumption \ref{ass:erg-poly} and bound of $\sigma^2$ from Assumption \ref{ass:coeff}, 
\begin{equation}\label{eq:goodgood}
 2 x b(x) + \sigma^2 (x) \le -  c_3 x^2 + c_4 \one_{K_2} ( x) ,
\end{equation}
where $ K_2 \subset \R $ is a compact set. Now, suppose that Assumption \ref{ass:erg-expo} is satisfied. Then for all
$|x| > r, $ $ ( 2 x a(x) + a(x)^2 ) \bar f(y)   \le 0  $ (recall that all $\bar f  $ is non-negative) implying that 
$$  \mathcal{A}_1^Z V (x,y) \le - c_3 x^2 + c_4 \one_{K_2}  ( x) + c_5 \bar f(y)  .$$ 
This implies \eqref{eq:lyapunov} since 
$$\bar f(y)  \le c_6 + c_7 \sum_{i,j } |y^{(ij)} |  \le c_6 + \tilde c_7 \log ( V_2 (y) ) $$
which is thus negligible with respect to the negative term  $ - c_1 V_2 (y) $ of \eqref{eq:good}.  

If only Assumption \ref{ass:erg-poly} holds, then we upper bound the jump part of \eqref{eq:jumpdiff} by 
$$( 2 x a(x) + a(x)^2)  \bar f (y) \le  C \bar f( y) + C  |x|^{1 + \eta } \bar f (y ) .$$
Choose $1 <  p < 2   $ and $ q > 2$ such that $ (1 + \eta ) p < 2 $ and $ \frac{1}{p} + \frac{1}{q} = 1.$ Then 
$$   |x|^{1 + \eta } \bar f (y )  \le  \frac{1}{p} |x|^{p ( 1 + \eta)}  + \frac{1}{q} \bar f^q (y)  .$$
Since $ (1 + \eta ) p < 2,$ the first term 
$$C  \frac{1}{p} |x|^{ (1 +\eta ) p} $$ 
will again be negligible with respect to the negative term $ - c_3 x^2 $ of \eqref{eq:goodgood}. Finally,  the second term $C \bar f( y) + C  \frac{1}{q} \bar f^q (y)   $ is treated as previously. This concludes our proof. 
\end{proof}

\begin{prop}\label{prop:lyapunovbis}
Grant Assumptions \ref{ass:coeff}, \ref{ass:erg-poly} (Polynomial frame) and \ref{ass:h}. Let $ V_1 ( x) = 1+ |x|^m$ and $ \alpha = 2/ m \in ] 0, 1 [.$  Then there exist positive constants $d_1, d_2$ such that the following Foster-Lyapunov type drift condition holds
\begin{equation}\label{eq:lyapunovbis}
\mathcal{A}^Z V\leq  d_1 - d_2 V^{ 1- \alpha}  .
\end{equation}
\end{prop}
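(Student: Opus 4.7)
The plan is to follow the same scheme as in the proof of Proposition \ref{prop:lyapunov}, based on the decomposition $\mathcal{A}^Z V = \mathcal{A}^Z_1 V + \mathcal{A}^Z_2 V$, where $\mathcal{A}^Z_1$ collects the continuous drift, the diffusion coefficient, and the impact on $x$ of a jump of $N$, while $\mathcal{A}^Z_2$ gathers the pure jump contribution in the $y$-direction acting on $V_2$. Since $V_2$ is the same function as before, the estimate $\mathcal{A}^Z_2 V_2(y) \leq -c_1 V_2(y) + c_2 \one_{K_1}(y)$ established in the proof of Proposition \ref{prop:lyapunov} (via the arguments of \cite{Clinet}) carries over unchanged, and in particular dominates $-c_1 V_2^{1-\alpha}(y)$ since $V_2 \geq V_2^{1-\alpha}$.

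For the new diffusion part, $V_1(x)=1+|x|^m$ is $C^2$ since $m > 2$, and I would compute
\[
\mathcal{A}^Z_1 V(x,y) = m\,\text{sgn}(x)|x|^{m-1}b(x) + \frac{m(m-1)}{2}\sigma^2(x)|x|^{m-2} + \bar f(y)\bigl[|x+a(x)|^m - |x|^m\bigr].
\]
On $\{|x|>r\}$, Assumption \ref{ass:erg-poly} makes the jump bracket nonpositive, so the whole jump-on-$x$ contribution is nonpositive there; on the compact $\{|x|\leq r\}$ the continuity of $a$ yields a bound of the form $C\bar f(y)\one_{\{|x|\leq r\}}$. Using $xb(x)\leq -\gamma$ and $\sigma^2\leq \sigma_1$ on $\{|x|>r\}$, the first two terms combine into $m|x|^{m-2}[-\gamma+(m-1)\sigma_1/2]$, which is a strictly negative multiple of $|x|^{m-2}$ thanks to the upper bound on $m$ in Assumption \ref{ass:erg-poly}. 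Hence
\[
\mathcal{A}^Z_1 V(x,y) \leq -c_3 |x|^{m-2} + c_4 + C\bar f(y)\one_{\{|x|\leq r\}}.
\]

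Combining both estimates, I would absorb the remaining $C\bar f(y)\one_{\{|x|\leq r\}}$ exactly as in the exponential frame: $\bar f(y)\leq c_6 + \tilde c_7\log V_2(y)$ and $\log V_2 = o(V_2)$ imply that any fixed fraction of $-c_1 V_2(y)$ dominates it, up to an additive constant. The last step is to recognise $V^{1-\alpha}$ on the right-hand side: subadditivity of $t\mapsto t^{1-\alpha}$ on $\R_+$ gives $V^{1-\alpha}\leq V_1^{1-\alpha}+V_2^{1-\alpha}\leq V_1^{1-\alpha}+V_2$, while $V_1^{1-\alpha}(x)=(1+|x|^m)^{1-\alpha}$ is comparable to $|x|^{m-2}$ at infinity (since $m(1-\alpha)=m-2$) and bounded on compacts. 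This matches the decay $-c_3|x|^{m-2}$ and produces the desired bound $\mathcal{A}^Z V\leq d_1 - d_2 V^{1-\alpha}$.

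The main obstacle will be the tight balance between the two $|x|^{m-2}$ contributions from drift and diffusion: their sum has to remain strictly negative at infinity, which is exactly the role of the upper constraint on $m$ in Assumption \ref{ass:erg-poly}. Once this strict negativity is secured, the rest is a structural transcription of the exponential-frame argument, with the quadratic scale $x^2$ replaced by $|x|^m$ and the dissipation rate correspondingly weakened from $V$ to $V^{1-\alpha}$.
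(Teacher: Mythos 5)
Your proposal is correct and follows essentially the same route as the paper: the same splitting $\mathcal{A}^Z V=\mathcal{A}_1^Z V+\mathcal{A}_2^Z V$, the same bound $\mathcal{A}_2^Z V_2\le -c_1V_2+c_2\one_{K_1}$, nonpositivity of the jump bracket $|x+a(x)|^m-|x|^m$ for $|x|>r$, and the concavity/subadditivity step $V^{1-\alpha}\le V_1^{1-\alpha}+V_2$. The only difference is that you carry out explicitly the ``standard calculus'' the paper delegates to the cited references (yielding the coefficient $m|x|^{m-2}[-\gamma+(m-1)\sigma_1/2]$, which is exactly where the upper bound on $m$ enters) and you spell out the absorption of the jump contribution on the compact $\{|x|\le r\}$ via $\bar f(y)\le c_6+\tilde c_7\log V_2(y)$, a point the paper leaves implicit.
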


\begin{proof}
Using the same arguments as in the proof of Proposition \ref{prop:lyapunov}, we have that 
$$ \mathcal{A}_2^Z V (x, y )  = \mathcal{A}_2^Z V_2 (y)  \le - c_1 V_2 (y) +  c_2 \one_{K_1} (y) .$$
Moreover, for all $ |x| \geq r, $ since $  |x + a(x) |^m -| x|^m \le 0,$ 
\begin{eqnarray*}
 \mathcal{A}_1^Z V  (x, y ) &=&   b(x)  V_1' (x) +   \frac{1}{2} \sigma^2(x)V_1^{''} ( x)  +\sum_{j=1}^{M} f_j \left( \sum_{k=1}^M y^{(jk)} \right) [ |x + a(x) |^m -| x|^m ] \\
 &\leq &  b(x)  V_1' (x) +   \frac{1}{2} \sigma^2(x)V_1^{''} ( x).
 \end{eqnarray*}
A standard calculus, see e.g. \cite{veret} or \cite{dashaevabis}, shows that, for suitable constants $e_1, e_2 > 0, $  
$$   b(x)  V_1' (x) +   \frac{1}{2} \sigma^2(x)V_1^{''} ( x)  \le e_1 - e_2 V_1 (x)^{ 1 - \alpha }.$$ 
Using that $ V_1 (x) \geq 1 $ such that $ (V_1 (x) + V_2 (y ) )^{ 1 - \alpha } \le V_1 (x)^{1- \alpha} + (1 - \alpha ) V_2 (y ) \le V_1 (x)^{1- \alpha } + V_2 (y )  ,$ we deduce from this the drift condition \eqref{eq:lyapunovbis} as in the proof of Proposition \ref{prop:lyapunov}.
\end{proof}

\subsection{Harris recurrence of $Z$}
We do now possess all ingredients to obtain our main results. 

\begin{theo}\label{harrisrec}
Grant Assumptions \ref{ass:coeff}, \ref{ass:erg-expo}, \ref{ass:markov} and  \ref{ass:h} and suppose that for all $ 1 \le i \le M, $ $f_i ( x) > 0 $ for all $ x \in \R.$ Then $ (Z_t)_{t \geq 0} $ is positive Harris recurrent with unique invariant measure $ \pi . $ In particular, for any starting point $z $ and any positive measurable function $g : \R \times \R^{M\times M} \to \R_+, $  as $T \rightarrow \infty, $ $\P_z-$almost surely,
$$ \frac{1}{T} \int_0^T g(Z_s)ds \to  \pi(g).$$
\end{theo}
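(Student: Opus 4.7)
The plan is to apply the classical Meyn--Tweedie criterion \cite{MT1992, MT1993} to the sampled chain $(Z_{nT})_{n \geq 0}$, where $T$ is the time horizon provided by Theorem \ref{thm:Doeblin}. That theorem directly shows that the compact set $C = B_R(z^*)$ is a small set for the skeleton with minorization measure $\nu$; since the minorization takes place at a single step, the skeleton is automatically aperiodic. The Foster--Lyapunov inequality \eqref{eq:lyapunov} from Proposition \ref{prop:lyapunov} is the second pillar: by Dynkin's formula one obtains $\mathbb{E}_z V(Z_t) \leq V(z) e^{-d_2 t} + d_1/d_2$, so that for $L$ sufficiently large the sublevel set $K_L = \{V \leq L\}$ contains $C$ and is visited by $(Z_{nT})$ with finite mean hitting time from every starting point.

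The missing link, and the main technical step, is to show that from any point of $K_L$ the skeleton reaches $C$ in a bounded number of steps with a uniformly lower-bounded probability. I would establish this by a reachability argument that reuses the construction from the proof of Theorem \ref{thm:Doeblin}. More precisely, given $z \in K_L$, I use the stochastic flow representation of Section \ref{sec:useful}: by imposing a suitable pattern of $M^2$ jumps of the auxiliary process $N^*$ with appropriate index choices, one forces $Y_T$ into a neighbourhood of $y^*$; simultaneously, by the support theorem \eqref{eq:support} applied on each inter-jump interval with a well-chosen control ${\tt h}$, one steers $X_T$ into a prescribed neighbourhood of $x^*$. The Aronson-type lower bound \eqref{eq:aronson} on the transition density of the flow $\Phi$ together with the positivity of the joint law of the jump times of $N^*$ and of their marks guarantees strict positivity of this probability; the Feller property (Proposition \ref{prop:Feller}) together with compactness of $K_L$ then produces the desired uniform lower bound, independent of the starting point in $K_L$.

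With this reachability estimate at hand, the standard strong Markov alternation between excursions to $K_L$ (controlled by the Lyapunov drift) and attempts to reach $C$ from $K_L$ shows that $C$ is hit in finite expected time from every starting point; hence the skeleton is $\nu$-irreducible, $C$ is petite, and the Meyn--Tweedie criterion implies that $(Z_{nT})$ is positive Harris recurrent with unique invariant probability $\pi$. The transfer to continuous time follows from the Feller property via standard arguments as used in \cite{MASUDA}: all $T$-skeletons share the same invariant measure $\pi$, which is then invariant for the full semigroup $(P_t)$. The $\mathbb{P}_z$-almost sure convergence $\frac{1}{T}\int_0^T g(Z_s)ds \to \pi(g)$ for non-negative measurable $g$ is then the classical ergodic theorem for positive Harris recurrent continuous-time Markov processes. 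The main obstacle is the uniform reachability of $C$ from the level set $K_L$, which requires carefully splicing the jump-time construction behind Theorem \ref{thm:Doeblin} with the Stroock--Varadhan support theorem applied to the Brownian flow $\Phi$.
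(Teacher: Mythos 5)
Your overall architecture is the same as the paper's: combine the local D\"oblin minorization of Theorem \ref{thm:Doeblin} on $C=B_R(z^*)$ with the Foster--Lyapunov drift of Proposition \ref{prop:lyapunov} (giving returns to a compact/level set with controlled moments), add a reachability argument showing the sampled chain actually enters $C$ with positive probability uniformly over that compact, and conclude Harris recurrence by regeneration/Meyn--Tweedie, the ergodic theorem being the classical consequence. The one place where your plan has a genuine gap is precisely the step you single out as the main technical one: you leave $z^*=(x^*,y^*)$ unspecified and claim that ``by imposing a suitable pattern of $M^2$ jumps with appropriate index choices, one forces $Y_T$ into a neighbourhood of $y^*$.'' This cannot work for an arbitrary $y^*$: by \eqref{eq:gen}--\eqref{eq:flowygrand}, each coordinate $Y^{(ij)}$ relaxes exponentially towards $0$ between jumps and is only ever displaced by the fixed increments $c_{ij}$ at jump times, so from a bounded set the reachable region of $Y$ is constrained (e.g.\ a coordinate with $c_{ij}>0$ can never be driven far below $-\sup_{K_L}|y^{(ij)}|$, and the jump sizes cannot be tuned). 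So the freedom in Theorem \ref{thm:Doeblin} to choose $z^*$ must be exploited: one has to target a point that is provably reachable from everywhere, and justify why.

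The paper's resolution is simpler than your jump-forcing scheme and is worth noting: take $y^*=0$ and $x^{**}=x^*$ in Theorem \ref{thm:Doeblin}. On the event that no jump occurs (which has positive probability uniformly over the compact, since the intensities are bounded along the deterministic flow), $\varphi_t(y)\to 0$ uniformly over $y\in K_2$, so $Y$ enters $B_R(0)$ after a deterministic time $t^*$ and stays there; simultaneously the support theorem \eqref{eq:support} (with $\sigma$ bounded below) steers $X$ into $B_R(x^*)$ and keeps the pair in $B_R(x^*,0)$ over a window of length $2T$, which guarantees that some skeleton time $kT$ falls inside the window; a conditional Borel--Cantelli argument then gives infinitely many visits of $(Z_{kT})$ to $C$, and the regeneration technique concludes. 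If you replace your ``force $Y_T$ near an arbitrary $y^*$'' step by this choice (or by any explicitly reachable target, with the uniformity over the starting compact argued as you do via Feller lower semicontinuity), your proof closes. Two minor further points: your claim that one-step minorization makes the skeleton ``automatically aperiodic'' implicitly needs $\nu(C)>0$, which is not guaranteed since $I_2$ need not meet $B_R(y^*)$ --- but aperiodicity is not needed for Harris recurrence or the almost-sure ergodic theorem; and your bound $\E_z V(Z_t)\le V(z)e^{-d_2t}+d_1/d_2$ from \eqref{eq:lyapunov} is indeed the standard Dynkin/Gronwall consequence used in the paper.
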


\begin{proof}[Proof of Theorem \ref{harrisrec}]
1) We fix any $ x^* \in \R $ and we wish to apply Theorem \ref{thm:Doeblin} with $y^* = 0 $ and $ x^* = x^{**}.$ Let $R$ be the associated radius. 

By Proposition \ref{prop:lyapunov} or Proposition \ref{prop:lyapunovbis}, we know that for a suitable compact set $K = K_1 \times K_2 , $ with $ K_1 \subset \R, K_2 \subset \R^{M\times M}, $  $Z$ comes back to $K $ infinitely often almost surely. Moreover, 
$$ \sup_{y \in K_2 , t \geq 0} \| \varphi_t (y) \|_1 := F  < \infty \; \; \mbox{ and } \; \;   \sup_{y \in K_2} \| \varphi_t (y) \|_1  \to 0 $$ 
as $t \to \infty ,$ by the explicit form of the flow in \eqref{eq:flowygrand}. Therefore there exists $t^* $ such that $\varphi_t (y) \in B_{R } ( 0) $ for all $t \geq t^* , $ for all $ y \in K_2 .$

Applying once more the support theorem for diffusions and observing that $ \sigma $ is strictly positive, Equation \eqref{eq:support} implies that
$$ \inf_{ x \in K_1 } \P ( \Phi_{t^*+s } ( x) \in B_R ( x^* ), 0 \le s \le 2T  ) > 0 $$ 
and thus
$$   \inf_{z\in K} \P_z ( X_{t^* + s  } \in B_R (x^* ) , Y_{t^*+s  } \in B_R ( 0 ), 0 \le s \le 2T  )    > 0 . $$ 
Consequently, using a conditional version of the Borel-Cantelli lemma, the sampled Markov chain $(Z_{kT})_{k \in \N}  $ visits $ B_{R } ( x^*, 0 )$ infinitely often almost surely. \\
2) The standard regeneration technique (see e.g. \cite{dashaeva}) allows to conclude that $(Z_{kT})_{k \in \N} $ and therefore $(Z_t)_t $ are Harris recurrent. This concludes the proof. 
\end{proof}

The following by-product of the above result will prove to be useful when dealing with statistical inference within this new model class.

\begin{prop}\label{prop:supportpi}
Grant all assumptions of Theorem \ref{harrisrec}  and write $ \pi^X $ for the projection of the invariant measure $ \pi $ onto the $X-$coordinate, that is, $\pi^X (dx ) = \int_{\R^{M \times M}} \pi (dx, dy ).$ Then $ \pi^X$  possesses a Lebesgue density which is bounded away from zero on each compact of $ \R.$ 
\end{prop}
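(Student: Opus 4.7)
The plan is to transfer the local Doeblin lower bound of Theorem \ref{thm:Doeblin} directly to $\pi^X$ via the invariance $\pi = \pi P_T$. Fix an arbitrary $x^{**} \in \R$ and apply Theorem \ref{thm:Doeblin} with $z^* := (x^{**}, 0)$ and this same $x^{**}$. This produces $R > 0$, open sets $I_1 \ni x^{**}$ and $I_2 \subset \R^{M \times M}$, a constant $\beta \in (0, 1)$, the ball $C = B_R(z^*)$, and $\nu = \mathcal{U}_{I_1 \times I_2}$ satisfying \eqref{doblinminorization}.

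For any Borel $A \subset \R$, invariance combined with \eqref{doblinminorization} yields
\begin{equation*}
 \pi^X(A) = \int \pi(dz)\, P_T(z, A \times \R^{M \times M}) \;\geq\; \beta\, \pi(C)\, \nu(A \times \R^{M \times M}) \;=\; \frac{\beta\, \pi(C)}{\lambda(I_1)}\, \lambda(A \cap I_1),
\end{equation*}
so that $\pi^X$ admits on $I_1$ a density with respect to Lebesgue measure bounded below by $\beta \pi(C)/\lambda(I_1)$. Given a compact set $K \subset \R$, I would cover it by finitely many such neighborhoods $I_1(x_1), \ldots, I_1(x_N)$ (obtained by applying the construction above at each of the $x_i$) and take the minimum of the corresponding lower bounds, which yields the claimed uniform positivity of the density on $K$.

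The main obstacle is the positivity $\pi(C) > 0$, which I would extract from the ingredients already present in the proof of Theorem \ref{harrisrec}. The Foster--Lyapunov inequality $\mathcal{A}^Z V \leq d_1 - d_2 V$ of Proposition \ref{prop:lyapunov}, integrated against $\pi$, yields $\pi(V) \leq d_1/d_2 < \infty$, so the sublevel set $K_V := \{V \leq M\}$ has $\pi(K_V) > 0$ for $M$ sufficiently large. The support-theorem argument already used in the proof of Theorem \ref{harrisrec} --- contraction of $\varphi_t$ toward $0$ on the $Y$-component combined with the support theorem for the Brownian flow $\Phi$ driving $X$ toward $x^{**}$ --- then provides $t^* > 0$ and $\eta > 0$ such that
\begin{equation*}
 \inf_{z \in K_V} \P_z\bigl( Z_s \in C \text{ for all } s \in [t^*, t^* + 2T] \bigr) \geq \eta.
\end{equation*}
Integrating the resulting bound $P_s(z, C) \geq \eta$ in $s$ against $\pi$ over $[t^*, t^* + 2T]$, restricted to $z \in K_V$, and using $\pi P_s = \pi$ produces $\pi(C) \geq \eta\, \pi(K_V) > 0$, concluding the argument.
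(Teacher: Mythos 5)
Your argument for the lower bound is sound, and your route to $\pi(C)>0$ (Lyapunov/tightness to get a compact of positive $\pi$-measure, then the same contraction-plus-support-theorem control already used in the proof of Theorem \ref{harrisrec}, then $\pi=\pi P_{t^*}$) is a valid, if more laborious, alternative to what the paper does: the paper simply centers the D\"oblin ball at a point $z^*=(x^*,y^*)\in \mathrm{supp}(\pi)$, so that $\pi(C)=\pi(B_R(z^*))>0$ holds by definition of the support, while the freedom in the choice of $x^{**}$ in Theorem \ref{thm:Doeblin} is what lets $I_1$ cover any prescribed point of the compact $K$. (Minor caveat on your variant: integrating $\mathcal{A}^Z V\le d_1-d_2V$ against $\pi$ to get $\pi(V)<\infty$ needs a justification such as Theorem 4.3 of \cite{MT1993}, but this is not even necessary, since tightness of $\pi$ already gives a compact set of positive measure.)

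The genuine gap is the existence of the Lebesgue density itself. From $\pi^X(A)\ge \frac{\beta\,\pi(C)}{\lambda(I_1)}\,\lambda(A\cap I_1)$ you may only conclude that $\pi^X$ dominates a multiple of Lebesgue measure on $I_1$, i.e.\ that the absolutely continuous part of $\pi^X$ has a density bounded below there; it does not exclude a singular part (compare $\lambda|_{I_1}+\delta_{x_0}$, which satisfies the same kind of lower bound without being absolutely continuous). Since the proposition asserts that $\pi^X$ \emph{possesses} a Lebesgue density, this must be proved separately, and your proposal never does so. The paper handles it by writing $\pi^X(A)=\int \pi(dz)\,\E_z[\one_A(X_t)]$, conditioning on the last jump time $L_t$ before $t$, and using that on $(L_t,t]$ the motion is the nondegenerate diffusion with transition density $p_{t-L_t}$ satisfying \eqref{eq:aronson}, so that by Fubini
\begin{equation*}
\pi^X(A)=\int_A \Big(\int \pi(dz)\,\E_z\big[p_{t-L_t}(X_{L_t},x)\big]\Big)dx ,
\end{equation*}
which yields absolute continuity and an explicit density; your D\"oblin-based lower bound then applies verbatim to bound that density below on compacts. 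Adding such an absolute-continuity step would close the gap.
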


\begin{proof}
Let $ A \in {\mathcal B} ( \R ) .$ Then for any $t > 0, $ 
\begin{equation}\label{eq:418}
 \pi^X ( A) = \int_{\R \times \R^{M\times M} } \pi ( dz ) \E_z [ \one_A ( X_t) ] . 
\end{equation} 
Let $ L_t := \sup \{ s \le t :  \exists j : \Delta N_s^{(j) } = 1\} $  be the last jump time of the process before time $t,$ $L_t = 0 $ if there is no such jump. Then by Fubini,
$$ \E_z [ \one_A ( X_t) ] = \int_A  \E_z [ p_{ t - L_t} ( X_{L_t } , y ) ] dy ,$$
where $p_t (x, y ) $ is the transition density of \eqref{eq:aronson}.  This implies the existence of the density $ \pi^X ( x) $ which is given by 
$$  \pi^X ( x) =  \int_{\R \times \R^{M \times M} } \pi ( dz ) \E_z [ p_{ t - L_t} ( X_{L_t } , x ) ]$$
for any $ t > 0.$  

Notice that we do not dispose of any regularity results of $ \pi^X ( x) $ with respect to $x.$ Indeed, the upper bound in \eqref{eq:aronson} does not allow to conclude that the almost sure continuity in $x$ of $ p_{ t - L_t} ( X_{L_t } , x ) $ survives the integration $ \pi (dz) \E_z ( \ldots) .$ 

We are however able to prove that $ \pi^X$ is lower bounded on compacts $ K \subset \R .$  For that sake, fix any $x^{**} \in K$ and apply \eqref{doblinminorization} to $ (x^* , y^* ) \in supp ( \pi ) $  such that $ \pi ( B_R ( z^* ) ) = \pi ( C) > 0 $ and to $ x^{**}.$ Then for any measurable $ A \subset I_1 = I_1 ( x^{**} ), $ applying the lower bound of \eqref{doblinminorization}  to \eqref{eq:418} and taking $t= T,$
$$ \int_A \pi^X (x) dx  \geq \beta \pi ( C) \frac{1}{\lambda (I_1)}  \int_A dx $$
implying that 
$$ \inf_{x\in I_1 } \pi^X(x)  \geq \beta \pi ( C)  \frac{1}{\lambda (I_1)} > 0 .$$
Therefore we have just shown that for all $ x^{**} \in K, $ there exists an open interval $ I_1 = I_1 ( x^{**} )$ containing $ x^{**}  \in I_1$ such that $\pi^X $ is strictly lower bounded on $I_1.$  Since we can cover the compact $K$ by a finite collection of such open intervals $ I_1 (x^{**}), $ this implies the desired lower bound of $ \pi^X $ on compacts.  

\end{proof}

In the sequel, following \cite{MT1993}, in any of the two frames (Assumption \ref{ass:erg-expo} or \ref{ass:erg-poly}) and for the choice of $V$ as in Proposition \ref{prop:lyapunov} or Proposition \ref{prop:lyapunovbis}, we introduce
$$ \| \mu \|_{ V} := \sup_{ g : |g| \le  V } | \mu ( g) | , \;  \| \mu\|_{TV} := \sup_{ g : |g | \le 1 } \mu ( g ) .$$ 
It is now straightforward to obtain our second main result.

\begin{theo}[Ergodicity]\label{theo:second}
Grant all assumptions of Theorem \ref{harrisrec}. Then there exist $c_1, c_2 > 0 $ such that for all $z \in \R \times \R^{M \times M},$  under Assumption \ref{ass:erg-expo},
\begin{equation}\label{eq:last} 
\| P_t(z , \cdot )  - \pi\|_{ V} \le c_1  V (z) e^{ - c_2 t} ,
\end{equation} 
and under Assumption \ref{ass:erg-poly}, 
\begin{equation}\label{eq:lastbis} 
\| P_t(z , \cdot )  - \pi\|_{ TV} \le c_1  V (z) t^{ \frac{1}{\alpha} - 1 }, 
\end{equation} 
where $\alpha $ is as in Proposition \ref{prop:lyapunovbis}. 
\end{theo}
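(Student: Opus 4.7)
The plan is to reduce the continuous-time convergence problem to a discrete-time one for the sampled chain $(Z_{kT})_{k \geq 0}$, where $T$ is the time horizon appearing in the Doeblin minorization (Theorem \ref{thm:Doeblin}), and then to invoke the standard Meyn-Tweedie machinery in each of the two regimes.

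First, I would verify that for $T$ as in Theorem \ref{thm:Doeblin}, the sampled chain $(Z_{kT})$ admits a petite set. More precisely, fixing any $z^* = (x^*, y^*) = (x^*, 0)$ and taking $x^{**} = x^*$, Theorem \ref{thm:Doeblin} provides a ball $C = B_R(z^*)$, a constant $\beta \in (0,1)$ and a probability measure $\nu$ such that $P_T(z,\cdot) \geq \beta \one_C(z) \nu(\cdot)$. This is precisely a one-step minorization for the sampled chain, so $C$ is small (hence petite) for $(Z_{kT})$, and the measure $\nu$ is an irreducibility measure.

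Next, I would convert the infinitesimal drift conditions into discrete drift conditions for the sampled chain via Dynkin's formula. In the exponential frame, Proposition \ref{prop:lyapunov} gives $\mathcal{A}^Z V \leq d_1 - d_2 V$, and integrating along the semigroup yields
\begin{equation*}
P_T V(z) \leq e^{-d_2 T} V(z) + \frac{d_1}{d_2}(1 - e^{-d_2 T}),
\end{equation*}
and by enlarging $C$ if necessary so that it contains the sub-level set $\{V \leq L\}$ for a large enough $L$ (this is possible thanks to Proposition \ref{prop:lyapunov}, noting that $V$ has compact sub-level sets and the Doeblin set $C$ can be chosen arbitrarily by the freedom in $x^*, y^*, R$ — or by enlarging $C$ which leaves the minorization valid with a smaller $\beta$), we obtain the geometric drift condition $P_T V \leq \rho V + b \one_C$ with $\rho < 1$. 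Combined with the minorization on $C$, Theorem 6.1 and Theorem 16.0.1 of Meyn-Tweedie \cite{MT1993} (or Theorem 2.3 of \cite{MT1992}) yield geometric ergodicity of the sampled chain in $V$-norm. The passage from discrete time $kT$ to continuous time $t$ is standard: for $t = kT + r$ with $r \in [0,T)$, we write $P_t(z,\cdot) - \pi = P_r(P_{kT}(z,\cdot) - \pi)$ and use that $P_r V \leq C_T V$ uniformly in $r \in [0,T]$ (again by Dynkin applied to the drift inequality) to obtain \eqref{eq:last}.

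In the polynomial frame, the same scheme applies but with a subgeometric drift. Proposition \ref{prop:lyapunovbis} gives $\mathcal{A}^Z V \leq d_1 - d_2 V^{1-\alpha}$ with $\alpha \in (0,1)$, which integrates to a discrete drift of the form $P_T V \leq V - c V^{1-\alpha} + b \one_C$ for appropriate constants; this is precisely the framework of subgeometric ergodicity developed in Douc-Fort-Moulines-Soulier and revisited in \cite{kulik}, and (under the notation of \cite{MT1993}) it gives total variation convergence at the polynomial rate $t^{1/\alpha - 1}$, as stated in \eqref{eq:lastbis}. The continuous-time version follows from the discrete one in the same manner as above.

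The main technical point is the subgeometric case: one has to carefully select the function $\phi(v) = v^{1-\alpha}$ governing the subgeometric drift, check that the sub-level sets of $V$ are contained (after possibly enlarging) in a petite set for the sampled chain, and then quote the rate $t^{1/\alpha - 1}$ from the general subgeometric ergodicity theorem; the exponential case, by contrast, reduces to a classical application of Theorem 6.1 of \cite{MT1993} and is mostly bookkeeping once the sampled-chain minorization and geometric drift are in hand.
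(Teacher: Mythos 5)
Your overall strategy (sampled $T$-skeleton, Doeblin minorization from Theorem \ref{thm:Doeblin}, Foster--Lyapunov drift, Meyn--Tweedie theory) is the same as the paper's, but the step where you make the drift set petite is wrong as written, and it is exactly the point the paper treats differently. Theorem \ref{thm:Doeblin} gives you no freedom in $R$: the radius is an output of the theorem (it comes from the local diffeomorphism argument) and is in general small; and enlarging $C=B_R(z^*)$ does \emph{not} ``leave the minorization valid with a smaller $\beta$'' --- for $z$ outside $B_R(z^*)$ the theorem provides no lower bound whatsoever. Hence your geometric drift $P_TV\le\rho V+b\one_C$ with $C$ containing a sub-level set $\{V\le L\}$ is not justified by the minorization alone. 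The paper closes precisely this gap without touching the minorization set: the skeleton is Feller (Proposition \ref{prop:Feller}) and $\nu$-irreducible with $\nu$ the uniform law on an open set, so the irreducibility measure has support with non-empty interior, and Theorem 3.4 of \cite{MT1992} then yields that \emph{all compact sets} are petite for the sampled chain; since the sub-level sets of $V$ are compact, the set where the drift inequality fails is petite. (Alternatively, one could cover the compact sub-level set by finitely many balls supplied by Theorem \ref{thm:Doeblin} and use that a finite union of petite sets of an irreducible chain is petite --- but some such argument must be supplied.)

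A second, smaller issue created by your discretization: the discrete-time geometric and subgeometric ergodicity theorems you invoke require the $T$-skeleton to be aperiodic, which you never verify, and it is not immediate from the minorization because $\nu$ need not charge $C$ (the set $I_2$ is the image of the jump-time map after $M^2$ forced jumps and has no reason to intersect $B_R(y^*)$). The paper avoids both issues by staying in continuous time: once all compacts are petite for the skeleton, the generator drift conditions of Propositions \ref{prop:lyapunov} and \ref{prop:lyapunovbis} feed directly into Theorem 6.1 of \cite{MT1993} in the exponential frame and into Theorem 3.2 of \cite{dfg} (or Theorems 1.19 and 1.23 of \cite{kulik}) in the polynomial frame, so no discrete drift, no aperiodicity check and no $kT\to t$ interpolation are needed. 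Your interpolation bound $P_rV\le C_T V$ and the conversion of the continuous drift into a discrete one are fine in themselves; the argument becomes correct once you replace the ``enlarge $C$'' step by the petite-compacts argument and settle aperiodicity, or simply switch to the continuous-time theorems as the paper does.
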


\begin{proof}
The sampled chain $ (Z_{kT })_{k \geq 0 }$ is Feller according to Proposition \ref{prop:Feller}. Moreover it is $ \nu-$irreducible, where $ \nu $ is the measure introduced in Theorem \ref{thm:Doeblin}, associated with the point $ ( x^*, 0 ) $ and $x^{**}, $ for any choice of $x^*, x^{**} \in \R,$ used in the proof of Theorem \ref{harrisrec}. Since $\nu$ is the uniform measure on some open set of strictly positive Lebesgue measure, the support of $\nu $ has non-empty interior. Theorem 3.4 of \cite{MT1992} implies that all compact sets are `petite' sets of the sampled chain. Under  Condition \ref{ass:erg-expo}, the Lyapunov condition established in Proposition \ref{prop:lyapunov} allows to apply Theorem 6.1 of \cite{MT1993} which implies the assertion in the exponential frame. In the polynomial frame, the assertion follows from Theorem 3.2 of \cite{dfg} or from Theorems 1.19 and 1.23 in \cite{kulik}. 
\end{proof}

\begin{rem}
In the above Theorem \ref{theo:second}, we do only treat two cases for the rate of convergence to equilibrium: the exponential and the polynomial one. Using slightly different Lyapunov functions, we could also deal with more general sub-exponential rates of convergence, as they have for example been considered in Theorem 1.20 in \cite{kulik}. We do not detail these calculations here since we are mostly interested in exponential rates of convergence. Finally, notice that it is also possible to interpolate between the total variation norm considered in \eqref{eq:lastbis} and the weighted total variation norm $ \| \cdot \|_V $ considered in \eqref{eq:last}, however at the cost of slower rates of convergence. Details can be found in the very instructive paper \cite{dfg}. 
\end{rem}

\subsection{Exponential $\beta$-mixing for $Z= (X, Y).$}
It is now easy to deduce from the above results the exponentially $\beta$-mixing property of the process, under Condition \ref{ass:erg-expo}. 
Recall that the $\beta-$mixing coefficient of $Z$ is given by 
$$ \beta_Z  ( t) = \sup_{s \geq 0} \int \| P_t ( z, \cdot ) - \mu P_{s+t} ( \cdot ) \|_{TV} \; \mu P_s ( dz) ,$$
where $ \mu = {\mathcal L} ( Z_0 )$ is the law of the initial configuration and where 
$$ \| \mu\|_{TV} := \sup_{ g : |g | \le 1 } \mu ( g ) $$
denotes the total variation distance. Notice that if $ \mu = \pi, $ then the process is in its stationary regime, and 
$$ \beta_Z (t) = \int \| P_t ( z, \cdot ) - \pi  \|_{TV} \pi ( dz ).$$

\begin{theo}\label{thm:mixing}
Grant all assumptions of Theorem \ref{harrisrec} and suppose that Condition \ref{ass:erg-expo} holds. Then $Z$ is exponentially $\beta-$mixing, that is, there exist constants $K, \theta  > 0 $ such that for any initial law $ \mu  $ with $ \mu ( V) < \infty, $ 
$$ \beta_Z (t) \le K e^{- \theta t } .$$
\end{theo}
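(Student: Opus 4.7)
The plan is to deduce the exponential $\beta$-mixing bound directly from the $V$-geometric ergodicity of Theorem \ref{theo:second}. The key structural observation is that $V \geq 1$ everywhere (since $V_2(y) = e^{\sum_{ij} m_{ij}|y^{(ij)}|} \geq 1$), hence $\|\cdot\|_{TV} \leq \|\cdot\|_V$, and Theorem \ref{theo:second} under Assumption \ref{ass:erg-expo} yields
$$\|P_t(z,\cdot) - \pi\|_{TV} \leq c_1 V(z) e^{-c_2 t}.$$

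First, I would apply a triangle inequality inside the integrand appearing in the definition of $\beta_Z(t)$:
$$\|P_t(z,\cdot) - \mu P_{s+t}\|_{TV} \leq \|P_t(z,\cdot) - \pi\|_{TV} + \|\pi - \mu P_{s+t}\|_{TV}.$$
Writing $\mu P_{s+t} - \pi = \int \mu(dz')[P_{s+t}(z',\cdot) - \pi]$ and using the above bound shows that the second term is $\leq c_1 e^{-c_2(s+t)} \mu(V)$, which is already independent of $z$. Integrating the first term against $\mu P_s(dz)$ produces the factor $(\mu P_s)(V)$.

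The only quantitative ingredient that is not immediate is a uniform-in-$s$ bound on $(\mu P_s)(V)$. This is where the Foster-Lyapunov drift \eqref{eq:lyapunov} is invoked: by a standard Dynkin-type argument (applied to $V$ truncated at exits from bounded sets, then passed to the limit via monotone convergence, which is justified because the intensity and the coefficients are controlled on every finite time horizon by Assumption \ref{ass:0}), the inequality $\mathcal{A}^Z V \leq d_1 - d_2 V$ implies
$$\E_z[V(Z_s)] \leq e^{-d_2 s} V(z) + \frac{d_1}{d_2}\bigl(1 - e^{-d_2 s}\bigr) \leq V(z) + \frac{d_1}{d_2},$$
so that $\sup_{s \geq 0} (\mu P_s)(V) \leq \mu(V) + d_1/d_2 < \infty$ as soon as $\mu(V) < \infty$.

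Combining these bounds gives
$$\beta_Z(t) \leq c_1 e^{-c_2 t}\Bigl(\mu(V) + \frac{d_1}{d_2}\Bigr) + \sup_{s\geq 0} c_1 e^{-c_2(s+t)}\mu(V) \leq K e^{-c_2 t},$$
with $\theta = c_2$ and $K = c_1\bigl(2\mu(V) + d_1/d_2\bigr)$. The main \emph{obstacle} is not a deep one: it is really just checking that the Dynkin identity for the unbounded function $V$ goes through under the extended-generator formalism, which follows from the localization argument sketched above. Everything else is routine assembly from results already proved in the paper.
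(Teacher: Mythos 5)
Your proof is correct and follows essentially the same route as the paper. The paper first handles the stationary case $\mu=\pi$ directly (using $\pi(V)<\infty$, which follows from Meyn--Tweedie's Theorem 4.3), and then for general $\mu$ cites Lemma~3.9 of \cite{MASUDA}, which is precisely the triangle-inequality decomposition you write out explicitly, combined with the same uniform-in-$s$ moment bound $\sup_{s\geq 0}\E[V(Z_s)]<\infty$ derived from Dynkin's formula and the drift \eqref{eq:lyapunov}. What your version buys is self-containedness: you never need to invoke Masuda's lemma as a black box, and you never need the separate stationary-case argument, since the triangle inequality plus the Lyapunov moment bound already cover any initial law with $\mu(V)<\infty$. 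Your remark that the Dynkin identity for the unbounded $V$ requires a localization step is correct and worth retaining; the paper glosses over this. The resulting constant $K=c_1(2\mu(V)+d_1/d_2)$ matches the paper's up to inessential rearrangement.
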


\begin{proof}
Suppose firstly that $ \mu = \pi.$ Then Theorem 4.3 of \cite{MT1993} implies that $ \int V d \pi < \infty $ such that we are able to integrate \eqref{eq:last} against $ \pi ( dx) $ to obtain 
$$ \beta_Z ( t) \le c_1 \pi ( \bar V) e^{-c_2 t }.$$
Putting $ K := c_1 $ and $ \theta = c_2, $ this implies the result in this case. 

In order to deal with the general non-stationary process, we apply Lemma 3.9 of \cite{MASUDA} with $ h = \bar V , $ $\delta (t) = c_1 e^{-c_2 t } $ and 
$$ \kappa = \sup_{ s \geq 0 } \E ( \bar V ( Z_s) ),$$
to deduce that 
$$ \beta_Z ( t) \le 2 c_1 \kappa   e^{-c_2 t }.$$ 
Putting $ K := 2 c_1 \kappa $ and $ \theta = c_2, $ this implies the result, if we have already shown that $ \kappa $ is finite. This last fact follows  immediately from \eqref{eq:lyapunov}, following the first lines of the proof of Theorem 6.1 of \cite{MT1993}. Indeed, we have by Dynkin's formula that 
$$  e^{\alpha t } \E_z ( V ( Z_t) ) \le V(z)  + \frac{\beta}{\alpha} e^{\alpha t } ,$$
implying that 
$$ \E_z (V( Z_t)) \le e^{- \alpha t } V(z) + \frac{\beta}{\alpha}.$$ 
Integrating this last inequality with respect to $ \mu ( dz) $ implies the result. 
\end{proof}

%\section{Discussion}
%In this paper, we have established unique ergodicity and exponential $\beta$-mixing for a new class of Markov processes $(X, Y)$ where $X$ is a jump-diffusion and $Y$  a PDMP defining the intensity process of an $M$-dimensional multivariate non-linear Hawkes process governing the jumps of $X.$ 
%These properties have been used in a companion paper \cite{charlottesarah} in a statistical context to study  nonparametric estimators of the parameters of the process.

\bibliography{BIB}

\begin{thebibliography}{10}

\bibitem{applebaum}
D.~Applebaum.
\newblock {\em L{\'e}vy processes and {S}tochastic {C}alculus}.
\newblock Cambridge {U}niversity {P}ress, 2009.

\bibitem{BDH}
E.~Bacry, S.~Delattre, M.~Hoffmann, and J-F Muzy.
\newblock Some limit theorems for {H}awkes processes and application to
  financial statistics.
\newblock {\em Stochastic Processes and their Applications}, 123(7):2475--2499,
  2013.

\bibitem{BACRY}
E.~Bacry and J-F Muzy.
\newblock Second order statistics characterization of {H}awkes processes and
  non-parametric estimation.
\newblock {\em arXiv preprint arXiv:1401.0903}, 2014.

\bibitem{benaim2015}
M.~Bena{\"i}m, S.~Le~Borgne, F.~Malrieu, and P-A Zitt.
\newblock Qualitative properties of certain piecewise deterministic {M}arkov
  processes.
\newblock {\em Ann. Inst. H. Poincar{\'e} Probab. Statist.}, 51(3):1040--1075,
  08 2015.

\bibitem{MADS}
M.~Bonde~Raad.
\newblock Renewal time points for {H}awkes processes.
\newblock {\em arXiv}, 2019.

\bibitem{BM1996}
P.~Br{\'e}maud and L.~Massouli{\'e}.
\newblock Stability of nonlinear {H}awkes processes.
\newblock {\em The Annals of Probability}, pages 1563--1588, 1996.

\bibitem{CHEN2017}
S.~Chen, A.~Shojaie, E.~Shea-Brown, and D.~Witten.
\newblock The multivariate {H}awkes process in high dimensions: Beyond mutual
  excitation.
\newblock {\em arXiv preprint arXiv:1707.04928}, 2017.

\bibitem{Clinet}
Simon Clinet and Nakahiro Yoshida.
\newblock Statistical inference for ergodic point processes and application to
  limit order book.
\newblock {\em Stochastic Processes and their Applications}, 127(6):1800 --
  1839, 2017.

\bibitem{DVJ}
Daryl~J. Daley and D.~Vere-Jones.
\newblock {\em An introduction to the theory of point processes: volume II:
  general theory and structure}.
\newblock Springer Science \& Business Media, 2007.

\bibitem{DFH}
S.~Delattre, N.~Fournier, and M~Hoffmann.
\newblock Hawkes processes on large networks.
\newblock {\em The Annals of Applied Probability}, 26(1):216--261, 2016.

\bibitem{charlottesarah}
C.~Dion and S.~Lemler.
\newblock Nonparametric drift estimation for diffusions with jumps driven by a
  {H}awkes process.
\newblock {\em HAL}, 2019.

\bibitem{dfg}
R.~Douc, G.~Fort, and A.~Guillin.
\newblock Subgeometric rates of convergence of f-ergodic strong markov
  processes.
\newblock {\em Stochastic Processes and their Applications}, 119(3):897 -- 923,
  2009.

\bibitem{DLO16}
A.~Duarte, E.~L{\"o}cherbach, and G.~Ost.
\newblock Stability, convergence to equilibrium and simulation of non-linear
  {H}awkes processes with memory kernels given by the sum of {E}rlang kernels.
\newblock {\em arXiv preprint arXiv:1610.03300}, 2016.

\bibitem{gobet}
E.~Gobet.
\newblock Lan property for ergodic diffusions with discrete observations.
\newblock {\em Ann. Inst. H. Poincar{\'e} Probab. Statist.}, 38(5):711--737,
  2002.

\bibitem{Carl}
C.~Graham.
\newblock Regenerative properties of the linear {H}awkes process with unbounded
  memory.
\newblock {\em arXiv}, 2019.

\bibitem{Has}
R.Z. Has'minskii.
\newblock {\em Stochastic stability of differential equations}.
\newblock Sijthoff \& Noordhoff, 1980.

\bibitem{HAWKES71}
A.G Hawkes.
\newblock Spectra of some self-exciting and mutually exciting point processes.
\newblock {\em Biometrika}, 58(1):83--90, 1971.

\bibitem{hodara2017}
P.~Hodara and E.~L{\"o}cherbach.
\newblock Hawkes processes with variable length memory and an infinite number
  of components.
\newblock {\em Advances in Applied Probability}, 49(1):84--107, 2017.

\bibitem{evaflow}
R.~H{\"o}pfner and E.~L{\"o}cherbach.
\newblock Statistical models for {B}irth and {D}eath on a {F}low: Local
  absolute continuity and likelihood ratio processes.
\newblock {\em Scandinavian Journal of Statistics}, 26(1):107--128, 1999.

\bibitem{ikeda1966}
N.~Ikeda, M.~Nagasawa, and S.~Watanabe.
\newblock A construction of {M}arkov processes by piecing out.
\newblock {\em Proc. Japan Acad.}, 42(4):370--375, 1966.

\bibitem{ikeda2014stochastic}
N.~Ikeda and S.~Watanabe.
\newblock {\em Stochastic Differential Equations and Diffusion Processes}.
\newblock North-Holland Mathematical Library. Elsevier Science, 2014.

\bibitem{itodiffusion}
K.~Ito and H.P McKean.
\newblock Diffusion processes and their sample paths, 1965.

\bibitem{veret2}
SA. Klokov and A~Yu Veretennikov.
\newblock On subexponential mixing rate for {M}arkov processes.
\newblock {\em Theory of Probability \& Its Applications}, 49(1):110--122,
  2005.

\bibitem{kulik}
Alexei~Michajlovič Kulik.
\newblock {\em Introduction to Ergodic rates for Markov chains and processes
  with applications to limit theorems}.
\newblock Potsdam University Press, 2015.

\bibitem{Kunita}
H.~Kunita.
\newblock {\em Stochastic {f}lows and stochastic differential equations}.
\newblock Cambridge University Press, Cambridge, 1992.

\bibitem{evaSPA}
E.~L\"ocherbach.
\newblock Absolute continuity of the invariant measure in piecewise
  deterministic {M}arkov {P}rocesses having degenerate jumps.
\newblock {\em Stoch. Proc. Appl.}, 128:1797--1829, 2018.

\bibitem{dashaeva}
E.~L\"ocherbach and D.~Loukianova.
\newblock On {N}ummelin splitting for continuous time {H}arris recurrent
  {M}arkov processes and application to kernel estimation for multi-dimensional
  diffusions.
\newblock {\em Stoch. Proc. Appl.}, 118:1301--1321, 2008.

\bibitem{dashaevabis}
E.~L\"ocherbach and D.~Loukianova.
\newblock Polynomial deviation bounds for recurrent harris processes having
  general state space.
\newblock {\em ESAIM: Probability and Statistics}, 17:195--218, 2013.

\bibitem{MASUDA}
H~Masuda.
\newblock Ergodicity and exponential $\beta$-mixing bounds for multidimensional
  diffusions with jumps.
\newblock {\em Stochastic Processes and their Applications}, 117(1):35--56,
  2007.

\bibitem{MT1992}
S.P. Meyn and R.L. Tweedie.
\newblock Stability of {Markovian processes I : Criteria for discrete-time
  chains.}
\newblock {\em Adv. Appl. Probab.}, 24:542--574, 1992.

\bibitem{MT1993}
S.P. Meyn and R.L. Tweedie.
\newblock Stability of {Markovian processes III : Foster-Lyapunov} criteria for
  continuous-time processes.
\newblock {\em Adv. Appl. Probab.}, 25:487--548, 1993.

\bibitem{MilSan-94}
A.~Millet and M~Sanz-Sol\'e.
\newblock A simple proof of the support theorem for diffusion processes.
\newblock {\em S\'eminaire de probabilit\'es de Strasbourg}, 28:36--48, 1994.

\bibitem{revuz}
D.~Revuz and M.~Yor.
\newblock {\em Continuous martingales and Brownian motion}, volume 293.
\newblock Springer Science \& Business Media, 2013.

\bibitem{RBRTM}
P.~Reynaud-Bouret, V.~Rivoirard, and C.~Tuleau-Malot.
\newblock Inference of functional connectivity in neurosciences via {H}awkes
  processes.
\newblock In {\em Global Conference on Signal and Information Processing
  (GlobalSIP), 2013 IEEE}, pages 317--320. IEEE, 2013.

\bibitem{StrVar-72}
D.W. Stroock and S.~R.~S. Varadhan.
\newblock On the support of diffusion processes with applications to the strong
  maximum principle.
\newblock In {\em Proceedings of the Sixth Berkeley Symposium on Mathematical
  Statistics and Probability, Volume 3: Probability Theory}, pages 333--359,
  Berkeley, Calif., 1972. University of California Press.

\bibitem{TANKOV}
P.~Tankov and E.~Voltchkova.
\newblock Jump-diffusion models: a practitioner’s guide.
\newblock {\em Banque et March{\'e}s}, 99(1):24, 2009.

\bibitem{veret}
A.Yu. Veretennikov.
\newblock On polynomial mixing bounds for stochastic differential equations.
\newblock {\em Stochastic Processes and their Applications}, 70(1):115 -- 127,
  1997.

\end{thebibliography}

\end{document}